\newcommand{\calD}{\mathcal{D}}
\newcommand{\calJ}{\mathcal{J}}
\newcommand{\sfR}{\mathsf{R}}
\newcommand{\frakr}{\mathfrak{r}}
\newcommand{\frakm}{\mathfrak{m}}
\newcommand{\frakS}{\mathfrak{S}}
\newcommand{\calL}{\mathcal{L}}
\newcommand{\calO}{\mathcal{O}}
\newcommand{\calR}{\mathcal{R}}
\newcommand{\be}{\mathcal{\begin{equation}}}
\newcommand{\ee}{\mathcal{\end{equation}}}
\newcommand{\bbE}{\mathbb{E}}
\newcommand{\bbG}{\mathbb{G}}
\newcommand{\bbT}{\mathbb{T}}
\newcommand{\bbF}{\mathbb{F}}
\newcommand{\bbC}{\mathbb{C}}
\newcommand{\bbP}{\mathbb{P}}
\newcommand{\bbQ}{\mathbb{Q}}
\newcommand{\bbR}{\mathbb{R}}
\newcommand{\bbZ}{\mathbb{Z}}
\newcommand{\sfr}{\textsf{r}}
\newcommand{\frakc}{\mathfrak{c}}
\newcommand{\Ref}{\textup{Ref}}
\newcommand{\Or}{\textup{O}}
\newcommand{\Spec}{\textup{Spec}~}
\newcommand{\Pic}{\textup{Pic}}
\newcommand{\Sp}{\textup{Sp}}
\newcommand{\id}{\textup{id}}
\newcommand{\Nm}{\textup{Nm}}
\newcommand{\NS}{\textup{NS}}
\newcommand{\Hom}{\textup{Hom}}
\newcommand{\rank}{\textup{rank}}
\newcommand{\la}{\langle}
\newcommand{\ra}{\rangle}
\newcommand{\half}{\tfrac{1}{2}}
\newcommand{\Aut}{\textup{Aut}}
\newcommand{\Num}{\textup{Num}}
\newcommand{\Tors}{\textup{Tors}}
\newcommand{\bmu}{\boldsymbol{\mu}}
\newcommand{\balpha}{\boldsymbol{\alpha}}
\newcommand{\bfpic}{\textbf{Pic}}
\newcommand{\Ker}{\text{Ker}}
\newtheorem{theorem}{Theorem}
\newtheorem{proposition}[theorem]{Proposition}
\theoremstyle{definition}     
\newtheorem{definition}[theorem]{Definition}
\theoremstyle{remark}
\dedicatory{To Shigeru Mukai on the occasion of his 60th birthday}
\title[Enriques surfaces]{A brief introduction to Enriques surfaces}
\author{Igor V. Dolgachev}
\address{Department of Mathematics, University of Michigan, 525 E. University Av., Ann Arbor, Mi, 49109, USA}
\email{idolga@umich.edu}
\begin{document}
\begin{abstract} 
This is a brief introduction to the theory of Enriques surfaces over arbitrary algebraically closed fields.
\end{abstract}

\maketitle
\tableofcontents
\section{Introduction} This is a brief introduction to the theory of Enriques surfaces. Over $\bbC$, this theory can be viewed as a part of the theory of K3-surfaces, namely the theory of pairs $(X,\iota)$ consisting of a K3-surface $X$ and a fixed-point-free involution $\iota$ on $X$. It also can be viewed as the theory of lattice polarizes K3 surfaces, where the lattice $M$ is the lattice $U(2)\oplus E_8(2)$ with the standard notation of quadratic hyperbolic lattices \cite{DolgachevM}. The account of this theory can be found in many introductory lecture notes, for example, in \cite{Barth}, \cite{Kondo}, \cite{Mukai2}, and in books \cite{BarthBook} or \cite{BeauvilleBook}. We intentionally omit this theory and try to treat the theory of Enriques surfaces without appeal to their 
K3-covers. This makes more sense when we do not restrict ourselves with the basic field of complex numbers and take for the ground field an algebraically closed field of arbitrary characteristic $p \ge 0$.
This approach to Enriques surfaces follows the book of F. Cossec and the author \cite{CD}, the new revised, corrected and extended version of which is in preparation \cite{CDL}. 

The author shares his passion for Enriques surfaces with Shigeru Mukai and is happy to dedicate this survey to him.  He is grateful to the organizers of the conference, and especially to Shigeyuki Kond\={o}, for the invitation and opportunity 
to give a series of lectures on Enriques surfaces. He also thanks Daniel Allcock for providing proofs of some  group-theoretical  results  and the thorough referee for pointing out to many inaccuracies in 
its earlier version of the survey.

\section{History} Let $S$ be a smooth projective surface over an algebraically closed field $\Bbbk$. We use the customary notations from the theory of algebraic surfaces. Thus we reserve $D$ to denote a divisor on $S$ and very often identify it with the divisor class modulo linear equivalence. The group of such divisor classes is the Picard group $\Pic(S)$. The group of divisor classes with respect to numerical equivalence is denoted by $\Num(S)$. We denote by $|D|$ the linear system of effective divisors linearly equivalent to $D$. We set 
$$h^i(D) = \dim_\Bbbk H^i(S,\calO_S(D)), \ p_g = h^0(K_S) = h^2(\calO_S),\ q = h^1(\calO_S).$$
We use the Riemann-Roch Theorem 
$$h^0(D)-h^1(D)+h^2(D) = \half (D^2-D\cdot K_S)+1-q+p_g$$
and Serre's duality $h^i(D) = h^{2-i}(K_S-D).$

The theory of minimal models provides us with a birational morphism $f:S\to S'$ such that either the canonical class $K_{S'}$ is nef (i.e. $K_{S'}\cdot C\ge 0$ for any effective divisor $C$), or $S'$ is a projective bundle over $\Spec \Bbbk$, or over a smooth projective curve $B$.

If the latter happens the surface $S$ is called \emph{ruled} and, if $S' = \bbP^2$ or $B\cong \bbP^1$, it is called \emph{rational}. A rational surface 
has $p_g = q = 0$ since the latter are birational invariants. In 1894, Guido Castelnuovo tried to prove that the converse is true. He could not do it without an additional assumption that $h^0(2K_S) = 0$. He used  the so called  \emph{termination of  adjoints} (showing that, under this assumption $|C+mK_S| = \emptyset$ for  any curve $C$ and large $m$, and, if $m$ is minimal with this property, the linear system $|C+(m-1)K_S|$ gives a pencil of rational curves on $S$ that  implies the rationality). 

The modern theory of minimal models provides us with a simple proof of Castelnuovo's Theorem. First use that $D^2 \ge 0$ for any nef divisor $D$.\footnote{In fact, take any positive number $N$ and an ample divisor $A$, then $ND+A$ is ample and 
$(ND+A)^2 = N^2D^2+2NA\cdot D+A^2$ must be positive, however if $D^2 < 0$ and $N$ is large enough, we get a contradiction.}
Thus, if $S$ is not rational, then we may assume that $K_S$ is nef, hence $K_S^2 \ge 0$.  By Riemann-Roch,
$h^0(-K_S)+h^0(2K_S)\ge K_S^2+1$ implies $h^0(-K_S)\ge 1$, thus $-K_S\ge 0$ cannot be nef unless $K_S = 0$ in which case $p_g =1$. 

Still not satisfied, Castelnuovo tried to avoid the additional assumption that $h^0(2K_S) = 0$. He discussed this problem with  Enriques during their walks under arcades of Bologna. Each found an example of a surface with $p_g = q = 0$ with some effective multiple of $K_S$. Since the termination of the adjoint is a necessary condition for rationality, the surfaces are not rational.

The example of Enriques is a smooth normalization of a non-normal surface $X$ of degree 6 in $\bbP^3$ that passes with multiplicity 2 through the edges  of the coordinate tetrahedron. Its equation is 
$$F = x_1^2x_2^2x_3^2+x_0^2x_2^2x_3^2+x_0^2x_1^2x_3^2+x_0^2x_1^2x_2^2
+x_0x_1x_2x_3q(x_0,x_1,x_2,x_3) = 0,$$
where $q$ is a non-degenerate quadratic form. 

The surface $X$ has \emph{ordinary singularities}: a double curve $\Gamma$ with ordinary triple points that are also triple points of the surface, and a number of pinch points. The completion of a local ring at a general point is isomorphic to $\Bbbk[[t_1,t_2,t_3]]/(t_1t_2)$, at triple points $\Bbbk[[t_1,t_2,t_3]]/(t_1t_2t_3)$, and at pinch points $\Bbbk[[t_1,t_2,t_3]]/(t_1^2+t_2^2t_3)$. Let $\pi:S\to X$ be the normalization. The pre-image of a general point on $\Gamma$ consists of two points, the pre-image of a triple point consists of three points, and the pre-image of a pinch point consists of one point.

Let 
$\frakc_0 = \mathcal{H}om_{\calO_{X}}(\pi_*\calO_S,\calO_X)$. It is an ideal in $\calO_X$, called the \emph{conductor ideal}.  It is equal to the annihilator ideal of $\pi_*\calO_S/\calO_X$. Let  $\frakc = \frakc_0\calO_S$. This is an ideal in $\calO_S$ and $\pi_*(\frakc) = \frakc_0$. The duality theorem for finite morphisms gives an isomorphism
\begin{equation}
\label{can}
\omega_S = \frakc\otimes \pi^*\calO_{X}(d-4),
\end{equation}
where $\omega_S$ is the canonical sheaf on $S$ and $d = \deg X$ (in our case $d = 6$). In particular, it implies that 
$\frakc$ is an invertible sheaf isomorphic to $\calO_S(-\Delta)$, where $\Delta$ is an effective divisor on $S$. Under the assumption on singularities, $\frakc_0\cong \calJ_{\Gamma}$, hence $\Delta = \pi^{-1}(\Gamma)$.

Returning to our sextic surface, we find that $\deg \Gamma = 6$, the number $t$ of triple points is equal to $4$ and each edge contains 4 pinch points. The canonical class formula shows that $\omega_S \cong \pi^*\calO_S(2)(-\Delta)$. The projection formula gives $\pi_*\omega_S \cong \calO_X(2)\otimes \calJ_{\Gamma}$ (we use that $\frakc_0$ annihilates $\pi_*\calO_S/\calO_X$).
Since $\deg \Gamma = 6$, $\omega_S$ has no sections, i.e. $p_g(S) = 0$. Also, the exact sequence
$$0\to \calJ_\Gamma(2)\to \calO_{X}(2) \to \calO_\Gamma(2) \to 0$$
allows us to check that $H^1(S,\omega_S) \cong H^1(X,\calJ_\Gamma(2)) = 0$, i.e. $q = 0$. We use that the curve $\Gamma$ is an ACM-scheme, i.e. the canonical homomorphism of graded algebras 
$\oplus H^0(\bbP^3,\calO_{\bbP^3}(n))\to \oplus H^0(\Gamma,\calO_{\Gamma}(n))$ is surjective.

Now, 
$$\omega_S^{\otimes 2} \cong \calO_S(2K_S) \cong \frakc^{\otimes 2}\otimes \pi^*\calO_{\bbP^3}(2d-8)$$
$$ \cong \pi^*\calO_{X}(4)(-2\Delta) = \pi^*(\calO_X(4)\otimes \calJ_{\Gamma}^{<2>}),
$$
where $\calJ_{\Gamma}^{<2>}$ is the second symbolic power of the ideal sheaf $\frakc_0 = \calJ_\Gamma$, the sheaf of functions vanishing with order $\ge 2$ at a general point of $\Gamma$. The global section of the right-hand side defined by the union of four coordinate planes shows that $h^0(2K_S) > 0$, in fact, $\omega_S^{\otimes 2} \cong \calO_S$. 

It follows from the description of singularities of the sextic that the pre-image of each edge of the tetrahedron, i.e. an irreducible component of the double curve $\Gamma$, is an elliptic curve. The pre-image of the section of the surface with a face of the tetrahedron is the sum of three elliptic curves $F_1+F_2+F_3$, where $F_i\cdot F_j = 1, i\ne j$ and $F_i^2 =0$.
The pre-images of the opposite edges are two disjoint elliptic curves $F_i+F_{i}'$. The preimage of the pencil of quadrics with the base locus equal to the union of four edges excluding a pair of opposite edges is an elliptic pencil on $S$ of the form $|2F_i| = |2F_i'|$.

This example of Enriques was included in Castelnuovo's paper \cite{Castelnuovo} and was very briefly mentioned in Enriques foundational paper \cite{Enriques1}, n.39.  Enriques returned to his surface only much later, in a paper of 1906 \cite{Enriques2}, where he proved that any nonsingular surface with $q = p_g = 0, 2K_S \sim 0$ is birationally isomorphic to a sextic surface as above or its degeneration \cite{Enriques2}. Modern proofs of Enriques' results were given  in the sixties, in the dissertations of Boris Averbuch from Moscow \cite{Averbuch}, \cite{Averbuch2} and Michael Artin from Boston \cite{Artin}.

In his paper Castelnuovo considers  the birational transformation of $\bbP^3$ defined by the formula
\[T:(x_0:x_1:x_2:x_3) = (y_2y_3:y_0y_1:y_0y_2:y_0y_3).\]
Plugging in this formula in the equation of the sextic, we obtain
$$F(x_0,x_1,x_2,x_3) = y_0^4y_2^2y_3^2Q_1(y_0y_1,y_2y_3,y_1y_3,y_1y_2)$$
$$+y_0^3y_1y_2^2y_3^2Q_2(y_0y_1,y_2y_3,y_1y_3,y_1y_2).$$
After dividing by $y_0^3y_2^2y_3^2$, we obtain that the image of $X$ is a surface of degree 5 in $\bbP^3$ given by the equation
$$G = y_0Q_1(y_0y_1,y_2y_3,y_1y_3,y_1y_2)+y_1Q_2(y_0y_1,y_2y_3,y_1y_3,y_1y_2) = 0.$$
It has four singular points $[1,0,0,0], [0,1,0,0], [0,0,1,0], [0,0,0,1]$. The local computations  show that the first two points are double points locally isomorphic to a singularity $z^2+f_4(x,y) = 0$, where $f_4(x,y)$ is a binary form of degree four without multiple roots. Classics called such a surface singularity an \emph{ordinary tacnode} with  the \emph{tacnodal tangent plane}  $z = 0$. Nowadays we call such a singularity a \emph{simple elliptic singularity} of degree 2. Its minimal resolution has a smooth elliptic curve as the exceptional curve with self-intersection equal to $-2$. The other two singular points of the quintic surface are ordinary triple points (= simple elliptic singularities of degree 3).

One can show the converse: a minimal resolution of a normal quintic surface with  two ordinary triple points and two tacnodes with tacnodal tangent planes equal to faces of the tetrahedron with vertices at the singular points is an Enriques surface. In modern times, the quintic birational 
models of  Enriques surfaces were studied in  \cite{Kim}, \cite{Stagnaro}, \cite{Umezu}. 

 Consider the birational transformation of $\bbP^3$ given by the formula
\begin{equation}\label{stand}
(y_0:y_1:y_2:y_3) = (x_1x_2x_3:x_0x_2x_3:x_0x_1x_3:x_0x_1x_2).
\end{equation}
It transforms the sextic surface $V(F_6)$ to a birationally isomorphic sextic surface $V(G_6)$. The two birational morphisms $S\to \bbP^3$ are defined by linear systems $|H|$ and $|H+K_S|$.

Since $[0,0,0,1]$ is a triple point of the quintic surface $V(G)$, we can write its equation  in the form
$$G = x_3^2A_3(x_0,x_1,x_2)+2x_3B_4(x_0,x_1,x_2)+C_5(x_0,x_1,x_2) = 0,$$
Projecting from the triple point $[0,0,0,1]$, we get a rational double cover $V(G)\dasharrow \bbP^2$. Its branch curve is a curve of degree 8 given by the equation $B_4^2-C_5A_3 = 0.$ The projections of the tacnodal planes $y_0 = 0$ and $y_1 = 0$ are line components of this octic curve. The residual sextic curve has a double point at the intersection of these lines and two tacnodes with tacnodal tangent lines equal to the lines. This is an \emph{Enriques octic}.

\xy
(-50,15)*{};(-50,-20)*{};
(0,0)*{};(20,0)*{}**\dir{-};(15,5)*{};(15,-15)*{}**\dir{-};
(5,2)*\cir<5.2pt>{d_u};(5,-2)*\cir<5.2pt>{u_d};(17,-10)*\cir<5.2pt>{l_r};(13,-10)*\cir<5.2pt>{r_l};
(5,0)*{\bullet};(15,-10)*{\bullet};(-2,0)*{\ell_1};(15,7.5)*{\ell_2};
(13,-2)*{};(17,2)*{}**\dir{-};(17,-2)*{};(13,2)*{}**\dir{-};
(5,3)*{\txt\small{p}_1};(12,-10)*{\txt\small{p}_2};
\endxy
In 1906 Enriques proved that any Enriques surface is birationally isomorphic to the double cover of $\bbP^2$ with branch curve as above or its degeneration \cite{Enriques2}.

Caslelnuovo  gave also his own  example of a non-rational surface with $q = p_g = 0$. It differs from Enriques' one by the property that $h^0(2K) = 2$. In this example, $S$ is given as a minimal resolution of a surface $X$ of degree 7 in $\bbP^3$ with the following singularities:
\begin{itemize}
\item a triple line $\ell$;
\item a double conic $C$ disjoint from $\ell$;
\item 3 tacnodes $p,q,r$ with tacnodal tangent planes $\alpha = 0,\beta = 0,\gamma = 0$ containing $\ell$.
\end{itemize}
The equation is 
$$F_7 = f_3^2h+\alpha\beta\gamma f_4 = 0,$$
where $h = 0$ is any plane containing the line $\ell$, $f_3 = 0$ is a cubic surface containing $\ell$ and $C$. The tacnodal planes are tangent planes to $f_3 =0 $ at the points $p,q,r$. The quartic surface $f_4 = 0$ contains $C$ as a double conic.

The pencil of planes through the line $\ell$ cuts out a pencil of quartic curves on $X$ with 2 nodes on $C$. Its members are birationally isomorphic to  elliptic curves. On the minimal resolution $S'$ of $X$, we obtain an elliptic fibration with a 2-section defined by the pre-image of the double conic. Each tacnodal tangent plane cuts out a double conic, and the pre-image of it on $S'$ is a divisor $2E_i+2F_i$, where $E_i$ is a $(-1)$-curve and $F_i$ is an elliptic curve. Blowing down $E_1,E_2,E_3$, we obtain a minimal elliptic surface $S$ with three double fibers. The canonical class is equal to $-F+F_1+F_2+F_3$. It is not effective. However, 
$2K_S \sim F$, so $h^0(2K) = 2$. 

\section{Generalities}
Recall  Noether's Formula
$$12(1-q+p_g) = K_S^2+c_2,$$
where $c_2 = \sum (-1)^ib_i(S)$ is the Euler characteristic in the usual topology if $\Bbbk = \bbC$ or $l$-adic topology otherwise.

In \emph{classical definition},   an Enriques surface is a smooth projective surface with $q=p_g = 0$ and  $2K_S = 0$. It is known that $q = h^1(\calO_S)$ is equal to the dimension of the tangent space of the Picard scheme $\bfpic_{S/\Bbbk}$. Thus its connected component $\bfpic_{S/\Bbbk}^0$ is trivial. The usual computation, based on the Kummer exact sequence, gives that $b_1 = 2\dim \bfpic_{S/\Bbbk}^0$. Thus $b_1 = 0$.  Noether's Formula   implies that $c_2 = 12$, hence  $b_2 = 10$. Also, since $2K_S = 0$,  $S$ is a minimal surface of Kodaira dimension $0$. A \emph{modern  definition} of an Enriques surface is the following (see \cite{BM}):

\begin{definition} An Enriques surface is a smooth projective minimal algebraic surface of Kodaira dimension 0 satisfying $b_1 = 0$ and $b_2= 10$. 
\end{definition}

 Other minimal surfaces of Kodaira dimension 0 are abelian surfaces with $b_1 = 4, b_2 = 6$, K3-surfaces with $b_1 = 0, b_2  = 22$, and hyperelliptic surfaces with $b_1 = b_2 = 2$ (see \cite{BM}). 
 
Let $S$ be an Enriques surface. Since the Kodaira dimension is zero, we obtain that  $K_S^2 = 0$. Also, since $h^0(K_S)$ is bounded, $p_g \le 1$.  Noether's Formula gives 
$q = p_g$. 

Recall that $\bfpic_{S/\Bbbk}^0$ parameterizes divisor classes algebraically equivalent to zero. It is an open and closed subscheme of the Picard scheme.  
$\bfpic_{S/\Bbbk}$ contains another closed and open subscheme $\bfpic_{S/\Bbbk}^\tau$ that  parameterizes divisor classes numerically equivalent to zero. The group $\bfpic_{S/\Bbbk}(\Bbbk)$ is the Picard group $\Pic(S)$ of divisor classes modulo linear equivalence. The group $\Pic^0(S): = \bfpic_{S/\Bbbk}^0(\Bbbk)$ is the subgroup of divisor classes algebraically equivalent to zero. The group $\Pic^\tau(S): = \bfpic_{S/\Bbbk}^\tau(\Bbbk)$ is the subgroup of numerically trivial divisor classes. The quotient group $\textup{NS}(S) = \Pic(S)/\Pic^0(S)$ is a finitely generated abelian group, the N\'eron-Severi group of $S$. The quotient group $\Pic(S)^\tau/\Pic^0(S)$ is the torsion subgroup $\Tors(\textup{NS}(S))$ of the N\'eron-Severi group and the quotient $\Pic(S)/\Pic^\tau(S)$ is isomorphic $\textup{NS}(S)/\Tors(\textup{NS}(S))$. It is a free abelian group denoted by $\Num(S)$.

If $p = 0$, all group schemes are reduced and $q = \dim \bfpic_{S/\Bbbk}^0$. In our case, this implies that $q = 0$. It is known that   $\bfpic_{S/\Bbbk}^0$ is reduced if $p_g = 0$ and, and for Enriques surfaces, this always happens if $p\ne 2$ \cite{BM2}. If $p = 2$ and $q = p_g =  1$, the group scheme 
$\bfpic_{S/\Bbbk}^0$ coincides with $\bfpic_{S/\Bbbk}^\tau$. It is a finite non-reduced group scheme of order 2 isomorphic to the group schemes $\bmu_2$ or $\balpha_2$. In the first case, an Enriques surface is called a $\bmu_2$-surface, and in the second case it is called an $\balpha_2$-surface, or \emph{supersingular surface} (because in this case the Frobenius acts trivially on $H^1(S,\calO_S)$ and $H^2(S,\calO_S)$).  

If $h^2(\calO_S) = h^1(\calO_S) = 0$, the Enriques surface $S$ is called \emph{classical}. In this case $\bfpic_{S/\Bbbk}^0 = 0$, $\Pic(S) = \textup{NS}(S)$ and   $\bfpic_{S/\Bbbk}^\tau$ is a constant group scheme defined by the group $\Tors(\textup{NS}(S))$. By Riemann-Roch, for any torsion divisor class $D\ne 0$ in $\Pic(S)$, we have 
$h^0(D)+h^0(K_S-D)\ge  1$. This implies that either $D$ or $K_S-D$ is effective. Since a non-trivial torsion divisor class cannot be effective, we have  $D\sim K_S$. 

It is known that $K_S$ is numerically trivial if the Kodaira dimension is equal to 0 (this is a highly non-trivial result, the core of the classification of algebraic surfaces). Since $h^0(2K_S)\ne 0$ because otherwise $S$ is rational, and $K_S$ is numerically trivial, $2K_S = 0$. So, $\Tors(\Pic(S)$ is of order $\le 2$. It is trivial if $q=p_g = 1$ and of order 2  otherwise.

If $p\ne 2$, the non-trivial 2-torsion element $K_S$ in $\Pic(S)$ gives rise to an \'etale double cover $f:X\to S$. We have 
$c_2(X) = 2c_2(S) = 24$, $K_X = f^*(K_S) = \calO_X$. Thus, $X$ is a K3-surface. If $p = 2$ and $S$ is a $\bmu_2$-surface, the same is true: there exists an \'etale double cover $f:X\to S$ and $X$ is a K3-surface. In other words, an Enriques surface in these cases is the quotient of a K3-surface by a fixed-point-free involution. So, the theory of Enriques surfaces becomes a chapter in the theory of K3 surfaces. This has been much overused in the modern literature by applying transcendental methods, in particular, the theory of periods of K3 surfaces, to solve some problems on Enriques surfaces of pure geometrical nature. These tools do not apply in the case of Enriques surfaces over fields of positive characteristic, however one can still cheat for some problems by lifting Enriques surfaces to characteristic 0.

For any finite commutative group scheme $G$ over $\Bbbk$, one has a natural isomorphism
$$\Hom_{\text{gr-sch}/\Bbbk}(D(G),\bfpic_{S/\Bbbk}) \cong H_{\text{fl}}^1(S,G),$$
where $D(G)$ is the Cartier dual of $G$ and the right-hand-side is the group of flat cohomology with coefficients in the sheaf represented by $G$. This group is isomorphic to the group of isomorphism classes of $G$-torsors over $S$. In our case, by taking $G = (\bbZ/2\bbZ)_\Bbbk, \bmu_2, \balpha_2$, we obtain $D(G) = \bmu_2,(\bbZ/2\bbZ)_\Bbbk, \balpha_2$, respectively. If $p\ne 2$ the groups $\bmu_2$ and $(\bbZ/2\bbZ)_\Bbbk$ are isomorphic. Hence, we have a non-trivial $(\bbZ/2\bbZ)_\Bbbk$-torsor if $p\ne 2$, or $S$ is a $\bmu_2$-surface. The corresponding degree 2 finite \'etale cover $\pi:X\to S$ is a K3-surface. The cover is known as the K3-cover of an Enriques surface. The Galois group of the cover is a group of order 2, acting freely on $X$ with the quotient isomorphic to $S$. Conversely, any such involution $\iota$ on a K3-surface, defines, after passing to the quotient map 
$X\to X/\la \iota\ra$ the K3-cover of the Enriques surface $S\cong X/\la\iota\ra$.

If $p = 2$ and $S$ is   a classical Enriques surface or an $\balpha_2$-surface, the non-trivial $\bmu_2$ or $\balpha_2$-torsor defines an inseparable degree 2 cover $X\to S$, also called the \emph{K3-cover}. However, the surface $X$ is not isomorphic to a K3-surface. It is birationally isomorphic to a K3-surface or it is a non-normal rational surface.

Let $\rho = \rank~\Pic(S) = \rank~\NS(S)$. If $\Bbbk = \bbC$, the Hodge decomposition $H^2(S,\bbC) = H^{2,0}+H^{1,1}+H^{0,2}$ implies that $H^{1,1} = b_2(S) = 10$. By the Lefschetz Theorem, all integral 2-cohomology classes are algebraic, hence $H^2(S,\bbZ) = \Pic(S)$ and $H^2(S,\bbZ)/\Tors = \Num(S) = \bbZ^{10}$. The Poincar\'e Duality implies that the intersection form on $\Num(S)$ is  a quadratic form on $\Num(S)$ defined by a symmetric matrix with determinant $\pm 1$. We say that $\Num(S)$ is a quadratic unimodular lattice. The adjunction formula $D^2 = 2\chi(\calO_S(D))-2$ for any irreducible effective divisor $D$ implies that $D^2$ is always even. The Hodge Index Theorem gives that the signature of the real quadratic space $\Num(S)\otimes \bbR$ is equal to $(1,9)$. Finally, Milnor's Theorem about even unimodular indefinite integral quadratic forms implies that $\Num(S) = U\perp E_8$, where 
$U$ is a hyperbolic plane over $\bbZ$ and $E_8$ is a certain negative definite unimodular even quadratic form of rank 8.

If $p \ne 0$, more subtle techniques, among them the duality theorems in \'etale and flat cohomology imply the same result provided one proves first that $\rho = b_2 = 10$. There are two proofs of this fact one by E. Bombieri and D. Mumford \cite{BM2} and another by W. Lang \cite{Lang}. The first proof uses the existence of an elliptic fibration on $S$, the second one uses the fact that an Enriques surface with no global regular vector fields can be lifted to characteristic 0. The fact  that  $\Num(S)$ is isomorphic to the lattice $U\oplus E_8$ was first proven by L. Illusie \cite{Illusie} who used  crystalline cohomology.  

One can use the following description of the lattice $U\oplus E_8$ which we denote by $\bbE_{10}$, sometimes it is called the \emph{Enriques lattice}. Let $\bbZ^{1,10}$ be the standard hyperbolic lattice with an orthonormal basis $e_0,e_1,\ldots,e_{10}$ satisfying $(e_i,e_j) = 0, (e_0,e_0) = 1, (e_i,e_i) = -1, i > 0$. Then $\bbE_{10}$ is isomorphic as a quadratic lattice to the orthogonal complement of the vector $k_{10} = 3e_0-\cdots-e_{10}$. The vectors 
$$\alpha_0 = e_0-e_1-e_2-e_3,\quad \alpha_i = e_i-e_{i+1},\  i = 1,\ldots,9$$
can be taken as a basis of $\bbE_{10}$. It is called a \emph{canonical root basis}. The  matrix of the symmetric bilinear form with respect to this basis is equal to $-2I_{10}+A$, where $A$ is the  incidence matrix of the graph pictured in Fig.2:

\begin{figure}[hp]
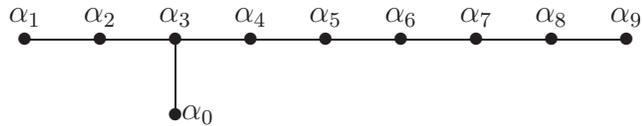

\xy (-30,0)*{};
@={(0,0),(10,0),(20,0),(30,0),(40,0),(50,0),(60,0),(70,0),(80,0),(20,-10)}@@{*{\bullet}};
(0,0)*{};(80,0)*{}**\dir{-};
(20,0)*{};(20,-10)*{}**\dir{-};
(0,3)*{\alpha_1};(10,3)*{\alpha_2};(20,3)*{\alpha_3};(30,3)*{\alpha_4};(40,3)*{\alpha_5};(50,3)*{\alpha_6};(60,3)*{\alpha_7};
(70,3)*{\alpha_8};(80,3)*{\alpha_9};(23,-10)*{\alpha_0};
\endxy
\caption{Enriques lattice}\label{enriqueslattice}
\end{figure}
The Enriques lattice $\bbE_{10}$ is isomorphic to the orthogonal complement of the canonical class of a rational surface obtained by blowing up 10 points in the projective plane. In fact, if we denote by $e_0$ the class of the pre-image of a line on the plane and by $e_i$ the classes of the exceptional divisors, we obtain that the canonical class is equal to $-3e_0+e_1+\cdots+e_{10}$, hence the claim. This explains the close relationship between the theory of Enriques surfaces and the theory of rational surfaces. In fact, if we take the 10 points in the special position, namely to be the double points of an irreducible rational curve of degree 6, the rational surface, called a \emph{Coble surface}, lies on the boundary of a partial compactification of the moduli space of Enriques surfaces.

We set 
$$f_i = e_i+k_{10},\ i = 1,\ldots, 10.$$
Since $(f_i,k_{10}) = 0$, these vectors belong to $\bbE_{10}$. We have 
$$(f_i,f_j) = 1-\delta_{ij},$$
where $\delta_{ij}$ is the Kronecker symbol. Also, adding up the expressions for the $f_i$'s, we obtain 
$$f_1+\cdots+f_{10} = 9k_{10}+3e_0 = 3(10e_0-3e_1-\cdots-3e_{10}).$$
We set
$$\delta = \frac{1}{3}(f_1+\cdots+f_{10}) = 10e_0-3e_1-\cdots-3e_{10}.$$
We have 
$$(\delta,\delta) = 10,\  (\delta,f_i) = 3,\ (f_i,f_j) = 1-\delta_{ij}.$$
A sequence of $k$ isotropic vectors in $\bbE_{10}$ satisfying the last property is called an \emph{isotropic $k$-sequence}. The maximal $k$ possible is equal to 10. An ordered isotropic 10-sequence defines a root basis in $\bbE_{10}$ as follows.   Consider the sublattice $L$ of $\bbE_{10}$ spanned by $f_1,\ldots,f_{10}$. The direct computation shows that its discriminant is equal to $-9$, thus it is a sublattice of index 3 in $\bbE_{10}$. The vector $\delta = \frac{1}{3}(f_1+\cdots+f_{10})$ has integer intersection with each $f_i$, hence it defines an element in the dual lattice $L^*$ such that $3\delta\in L$. This implies that $\delta\in \bbE_{10}$ and we may set 
\[
\alpha_0^* = \delta,\  \alpha_1^* = \delta-f_1,\ \alpha_2^* = 2\delta-f_1-f_2, \ \alpha_i^* = 3\delta-f_1-\cdots-f_i, i \ge 3.
\]
The vectors $(\alpha_1^*,\ldots,\alpha_{10}^*)$ form a basis of $\bbE_{10}$ and its dual basis $\alpha_0,\ldots,\alpha_9$ is a canonical root basis with the intersection graph as in Figure \ref{enriqueslattice}. 

The following matrix is the intersection matrix of the vectors $\alpha_i^*$. It was shown to me first by S. Mukai during our stay in Bonn in 1983. 

$$\left(\begin{array}{cccccccccc}10 & 7 & 14 & 21 & 18 & 15 & 12 & 9 & 6 & 3 \\
7 & 4 & 9 & 14 & 12 & 10 & 8 & 6 & 4 & 2 \\
14 & 9 & 18 & 28 & 24 & 20 & 16 & 12 & 8 & 4 \\
21 & 14 & 28 & 42 & 36 & 30 & 24 & 18 & 12 & 6 \\
18 & 12 & 24 & 36 & 30 & 25 & 20 & 15 & 10 & 5  \\
15 & 10 & 20 & 30 & 25 & 20 & 16& 12 & 8 & 4 \\
12 & 8 & 16 & 24 & 20 & 16 & 12 & 9 & 6 & 3 \\
9 & 6 & 12 & 18 & 15 & 12 & 9 & 6 & 4 & 2 \\
6 & 4 & 8 & 12 & 10 & 8 & 6 & 4 & 2 & 1 \\
3 & 2 & 4 & 6 & 5 & 4 & 3 & 2 & 1 & 0\end{array}\right).
$$

Let $\Or(\bbE_{10})$ be the orthogonal group of the lattice $\bbE_{10}$, i.e. the group of automorphisms of $\bbE_{10}$ preserving the quadratic form. We have 
$$\Or(\bbE_{10}) = W(\bbE_{10})\times \{\pm 1\},$$
where $W(\bbE_{10})$ is the Weyl group of $\bbE_{10}$ generated by reflections $s_{\alpha_i}:x\mapsto x+(x,\alpha_i)\alpha_i$. It coincides with the full \emph{reflection group} of $\bbE_{10}$, the group generated by $s_\alpha:x \mapsto x+(x,\alpha)\alpha$, where $\alpha$ is any vector with $(\alpha,\alpha) = -2$ (see, for example, \cite{DolgR}).

\section{Polarized Enriques surfaces}
The moduli space of Enriques surfaces exists as a stack only. If $p \ne 2$, it is an irreducible smooth unirational Artin stack of dimension 10. Over $\bbC$, it admits a coarse moduli space isomorphic to an arithmetic  quotient of a symmetric domain of orthogonal type (or type IV). If $p = 2$, it consists of two irreducible unirational components intersecting along a 9-dimensional substack. One component corresponds to classical Enriques surfaces and another one corresponds to $\bmu_2$-surfaces. The intersection corresponds to $\alpha_2$-surfaces. This is a recent result of Christian Liedtke \cite{Liedtke}.

To consider a quasi-projective moduli space one has to polarize the surface. A \emph{polarized surface} is a pair $(S,D)$, where $D$ is a nef divisor class with $D^2 > 0$ and $|D|$ is base-point-free. An isomorphism of polarized surfaces $(S,D)\to (S',D')$ is an isomorphism $f:S\to S'$ such that $f^*(D') \sim D$.  Let us discuss such divisor classes. 

Let $D$ be any irreducible curve on $S$. By adjunction formula, $D^2\ge -2$. If $D^2 = -2$, then $D\cong \bbP^1$. An Enriques surface containing a smooth rational curve is called \emph{nodal} and \emph{unnodal} otherwise. If $D^2\ge 0$,  by Riemann-Roch, $h^0(D)> 0$. Let $W_S^n$ be the subgroup of the reflection group of $\Num(S)$ generated by the reflections $s_\alpha$, where $\alpha$ is the class of a smooth rational curve. Applying elements of $W_S^n$, we obtain that 
$D\sim D_0+\sum R_i$, where $D_0$ is a nef effective divisor, and $R_i\cong \bbP^1$. If $D$ is nef and $D^2 \ge 2$, then $h^1(D) = 0$ and $\dim |D| = \half D^2$. If $D^2 = 0$, then $D = kE$, where $h^0(E) = 1$ but $h^0(2E) = 2$. The linear system $|2E|$ is a pencil of curves of arithmetic genus 1. It defines a morphism  $f:S\to \bbP^1$ whose general fiber is a curve of arithmetic genus 1. It is called a \emph{genus one fibration}. A general fiber is nonsingular if $p \ne 2$ but could have a cusp if $p = 2$. In the latter case the genus one fibration is called a \emph{quasi-elliptic fibration}.  If $K_S\ne 0$ (resp. $K_S = 0$), a genus one  fibration has two fibers (resp. one fiber)  of the form $2F$, called a \emph{double fiber}.

For any nef divisor $D$ with $D^2 > 0$, let $\Phi(D) = \min\{|D\cdot E|\}$, where $E^2 = 0$. The function $\Phi$ satisfies an inequality (see \cite{CD}, Corollary 2.7.1) 
$$\Phi(D)^2\le D^2.$$

We have $\Phi(D) = 1$ if and only if $|D|$ has base-points (two, counting with multiplicity). Also $\Phi(D) = 2$ if and only $|D|$ defines a double cover of a normal surface, or a birational morphism onto a non-normal surface, or $D^2 = 4$ and the map is of degree 4 onto $\bbP^2$. In the first case, the linear system is called \emph{superelliptic} (renamed to \emph{bielliptic} in \cite{CDL}). Finally, $\Phi(D)\ge 3$ if and only if $|D|$ defines a birational morphism onto a normal surface with at most rational double points as singularities.

Here are examples.

If $D^2 = 2$, then $D\sim E_1+E_2$ or $D\sim 2E_1+R+K_S$, where $|2E_i|$ are genus one pencils and $R\cong \bbP^1$ such that 
$E_i\cdot E_2 = 1$ and $R\cdot E_1 = 1$. The linear system $|D|$ is a pencil of curves of arithmetic genus 2.

Assume $D^2 = 4$ and $\Phi(D) = 1$, then, after blowing up the two base points, we obtain a degree 2 map to $\bbP^2$ with the branch divisor equal to an Enriques octic which may be degenerate  if $S$ is nodal. If $\Phi(D) = 2$,  and $S$ is unnodal, then $D\sim E_1+E_2$, where $|2E_i|$ are genus one pencils and $E_1\cdot E_2 = 2$. The map given by $|D|$ is a finite map of degree 4 onto $\bbP^2$. If $p\ne 2$, its branch locus is a curve of degree 12, the image of the dual of a nonsingular cubic curve under a map $\bbP^2\to \bbP^2$ given by conics \cite{Verra1}. If $p = 2$, the map could be inseparable. If $S$ is a $\bmu_2$-surface, the map is separable and its branch curve is a plane cubic. If $p\ne 2$ or  $S$ is a $\bmu_2$-surface, the preimage $\tilde{D}$ of $D$  on the K3-cover $X$ defines a linear system $|\tilde{D}|$ on $X$ that maps $X$ onto a complete intersection of three quadrics in $\bbP^5$.  
 
Assume $D^2 = 6$ and $\Phi(D) = 2$. Again, if $S$ is unnodal, then $D\sim E_1+E_2+E_3, $ where $|2E_i|$ are genus one pencils and $E_i\cdot E_j = 1$. The map is a birational map onto an Enriques sextic in $\bbP^3$. The moduli space of polarized surfaces $(S,D)$ admits a compactification,  a GIT-quotient of the space of sextic surfaces passing  through the edges of the tetrahedron with multiplicity two.  
 
 Assume $D^2 = 8$ and $\Phi(D) = 2$. If $S$ is unnodal, then $D \sim 2E_1+2E_2$ or $D\sim 2E_1+2E_2+K_S$, where $|2E_i|$ are genus one pencils and $E_1\cdot E_2 = 1$.  In the first case, the map given by the linear system $|D|$ is a double cover $\phi:S\to \calD_4$, where $\calD$ is a 4-nodal quartic del Pezzo surface. It is isomorphic to a complete intersection of two quadrics in $\bbP^4$ with equations
$$x_0x_1+x_2^2 = 0, \ x_3x_4+x_2^2 = 0.$$
Its minimal resolution is  isomorphic to the blow-up $X$ of five points in the projective plane equal to the singular points of an Enriques octic curve. The rational map $X\dasharrow \calD_4$ is given by the anti-canonical linear system. The cover ramifies over the singular points and a curve from $|\calO_\calD(2)|$.  Thus, birationally, the cover is isomorphic to the double cover of the plane branched over an Enriques octic. 

If $S$ is nodal, the degree 8 polarization can be also given by the linear system $|4E+2R|$ or $|4E+2R+K_S|$, where $|2E|$ is a genus one pencil and $R$ is a $(-2)$-curve with $E\cdot R = 1$. In the first case,  the linear system $|4E+2R|$ defines a degree 2 cover of a degenerate 4-nodal quartic del Pezzo surface. Its equations are 
$$x_0x_1+x_2^2 = 0, \ x_3x_4+x_0^2 = 0.$$
It has two ordinary nodes and one rational double point of type $A_3$. Its minimal resolution is isomorphic to the blow-up of four points in the plane equal to singular points of a degenerate Enriques octic.

 Figure 2 is the picture of the branch curve of the rational map 
$S\dasharrow \tilde{\calD}$, where $\tilde{\calD}$ is a minimal resolution of  singularities of $\calD$.  We assume here that  $p\ne 2$.

 \begin{figure}[ht]
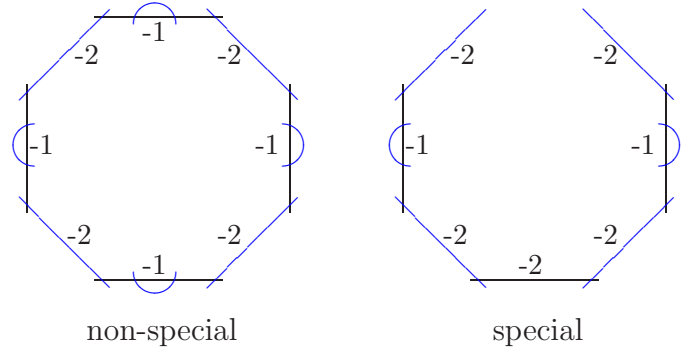

 \xy (-30,20)*{};
(0,1)*{};(0,-16)*{}**\dir{-};(35,1)*{};(35,-16)*{}**\dir{-};
(9,10)*{};(26,10)*{}**\dir{-};(9,-25)*{};(26,-25)*{}**\dir{-};
(11,11)*{};(-1,-1)*{}**{\color{blue}\dir{-}};(24,11)*{};(36,-1)*{}**{\color{blue}\dir{-}};
(-1,-14)*{};(11,-26)*{}**{\color{blue}\dir{-}};(36,-14)*{};(24,-26)*{}**{\color{blue}\dir{-}};
(1,-7)*{\color{blue}\cir<9pt>{l_r}};
(34,-7)*{\color{blue}\cir<9pt>{r_l}};
(17,9)*{\color{blue}\cir<9pt>{u_d}};
(17,-24)*{\color{blue}\cir<9pt>{d_u}};
(18,-32)*{\txt{non-special}};
(2,-7)*{\txt\small{-1}};(32,-7)*{\txt\small{-1}};(17,8)*{\txt\small{-1}};(17,-23)*{\txt\small{-1}};
(8,5)*{\txt\small{-2}};(27,5)*{\txt\small{-2}};(7,-19)*{\txt\small{-2}};(27,-19)*{\txt\small{-2}};
(50,1)*{};(50,-16)*{}**\dir{-};(85,1)*{};(85,-16)*{}**\dir{-};
(59,-25)*{};(76,-25)*{}**\dir{-};
(61,11)*{};(49,-1)*{}**{\color{blue}\dir{-}};(74,11)*{};(86,-1)*{}**{\color{blue}\dir{-}};
(49,-14)*{};(61,-26)*{}**{\color{blue}\dir{-}};(86,-14)*{};(74,-26)*{}**{\color{blue}\dir{-}};
(51,-7)*{\color{blue}\cir<9pt>{l_r}};
(84,-7)*{\color{blue}\cir<9pt>{r_l}};
(68,-32)*{\txt{special}};
(52,-7)*{\txt\small{-1}};(82,-7)*{\txt\small{-1}};
(58,5)*{\txt\small{-2}};(77,5)*{\txt\small{-2}};(57,-19)*{\txt\small{-2}};(77,-19)*{\txt\small{-2}};(67,-23)*{\txt\small{-2}};
\endxy
\caption{Branch curve of a bielliptic map ($p \ne 2$)}\label{branchcurve3}
\end{figure}
If $S$ admits a K3-cover, then the preimage of the linear system $|D|$ on the cover defines a degree 2 map onto $\bbP^1\times \bbP^1$ with branch curve of type $(4,4)$ invariant with respect to an involution of the quadric with four isolated fixed points. This is sometimes referred to as the \emph{Horikawa model}.

The  linear system $|D+K_S|$, where $|D| = |2E_1+2E_2|$ or $|4E+2R|$ as above,  maps $S$ birationally onto a non-normal surface of degree 8 in $\bbP^4$. So, we see that the type of polarization depends on the linear but not the numerical equivalence class of the divisor.

Note that all such linear systems exist on any Enriques surface, nodal or not.

Finally, assume $D^2 = 10$ and $\Phi(D) = 3$. If $S$ is unnodal, the linear system $|D|$ defines a closed embedding onto a surface $\textsf{F}$ of degree 10 in $\bbP^5$. Its homogeneous ideal is generated by 10 cubics. This model was first studied by Gino Fano in \cite{Fano2} and \cite{Fano1}. One can represent the divisor class $3D$ as the sum of 10 divisor classes $E_1+\cdots+E_{10}$ whose numerical classes form an isotropic 10-sequence in $\Num(S)$.  The images of $E_i$ and $E_i'\in |E_i+K_S|$ are plane cubics contained in $\textsf{F}$.
The linear system $ |D-E_i-E_j|, i\ne j,$ consists of an isolated genus one curve $E_{ij}$ which is mapped onto a curve of degree 4 on $\textsf{F}$. The linear system $|E_i+E_j+E_k|, k\ne i\ne j,$ maps $S$ onto an Enriques sextic $S'$ in $\bbP^3$,  the image of $E_{ij}$ is an elliptic quintic, and the images of $E_i$ and $E_j$ are coplanar edges of the tetrahedron. The images of the 7 curves $E_s, s\ne i,j,k,$ and 21 curves $E_{ab}, a,b\ne i,j,k,$ are plane cubic curves on $S'$. The residual cubic curve is 
the unique curve in the linear system $|E_1+E_2+E_3-E_s|$ or $|E_1+E_2+E_3-E_{ab}|$, they intersect at 6 points lying on the edges and three additional points. 
If $p\ne 2$, we also have the adjoint 28 curves $E_i'$ and $E_{ij}'$ numerically equivalent to $E_i$ and $E_{ij}$, respectively. The corresponding planes containing the images of the curves in a pair of adjoint curves intersect along a line  in a face of the tetrahedron.  
The image of the cubic curve $E_s$ (resp. $E_{ab}$) under the Cremona transformation \eqref{stand} is the cubic 
curve $E_s'$ (resp. $E_{ij}'$). 
 
Note that the numerical equivalence classes of the curves $E_i$ are determined uniquely by the choice of the Fano polarization $|D|$. A choice of an ordered representatives $E_i$ of these classes such that $3D\sim E_1+\cdots+E_{10}$ defines a \emph{supermarking} of $S$, i.e. a splitting of the projection $\Pic(S)\to \Num(S)$ preserving the intersection forms. A \emph{marking} of $S$ is just an isomorphism of quadratic lattices $\Num(S)\to \bbE_{10}$. So, there are $2^9$ supermarkings lifting a given marking. A supermarking of $S$ defines a choice of 10 planes in $\bbP^5$ cutting out 10 plane cubics on the Fano model $\textsf{F}$. One can show that  the moduli space of supermarked unnodal Enriques surfaces is irreducible and it is mapped into an irreducible component of the variety of ordered 10-tuples of mutually intersecting planes in $\bbP^5$ (see \cite{DM}).

If $S$ is nodal and $K_S \ne 0$, one of the  Fano polarization $D$ or $D+K_S$ maps $S$ into a nonsingular quadric in $\bbP^5$. If we identify the quadric with the Grassmann variety $G(2,4)$ of lines in $\bbP^3$, then the image of $S$ is isomorphic to the \emph{Reye congruence} of lines, the set of lines in a web of quadrics in $\bbP^3$ that are contained in a pencil from the web.  
Such  polarization of a nodal surface is called a Reye polarization \cite{CossecR}. If $D$ is a Reye polarization, then $|D+K_S|$ maps $S$ into $\bbP^5$ that can be identified with a general 5-dimensional linear system of quadrics. The image of $S$ is the locus of reducible quadrics.
 
 An interesting open problem is to determine the Kodaira dimension of the moduli space of polarized Enriques surface. If $D^2 = 4$ and $D = |E_1+E_2|$ with $E_1\cdot E_2 = 2$, then the moduli space is rational \cite{Casnati}.  
If $|D| = |E_1+E_2+E_3|$ is an Enriques sextic polarization, then, up to a projective transformation, a sextic model is defined uniquely by the quadratic form $q$. This shows that the moduli space is also rational. The moduli space of Enriques surfaces with polarization of degree 8 and type $|2E_1+2E_2|$ is birationally isomorphic to the GIT-quotient  
 $|\calO_{D_4}(2)|/\Aut(\calD_4)$. It can be shown to be rational \cite{CDL}. The moduli space of Enriques surfaces with a Fano polarization is birationally covered by the space of quintic elliptic curves in $\bbP^3$ \cite{Verra2}. It was shown in 
 loc.cit. that the latter space is rational and of dimension 10 . Thus the moduli space of Fano polarized Enriques surfaces is unirational. 
 
It is conjectured that the moduli space of polarized Enriques surfaces is always unirational (or, at least of negative Kodaira dimension).\footnote{A recent paper of V. Gritsenko and K. Hulek (\textit{Moduli of polarized Enriques surfaces}, math.AG.arXiv:1502.02723) disproves this conjecture.}  Note that, over $\bbC$, the coarse moduli space of Enriques surface is rational \cite{KondoR}.

\section{Nodal Enriques surfaces}

Recall that a nodal Enriques surface is an Enriques surface $S$ containing a smooth rational curve. By adjunction formula, the self-intersection of such curve is equal to $-2$, for this reason it is often called  a $(-2)$-curve. Over $\bbC$, a smooth rational curve $R$ on $S$ splits under the K3-cover $\pi:X\to S$ into the disjoint sum of two smooth rational curves $R_+$ and $R_-$. The Picard group $\Pic(X)$ contains the divisor class  $R_+-R_-$ that does not belong to $\pi^*(\Pic(S))$. The theory of periods of lattice polarized K3 surfaces shows that the nodal surfaces form an irreducible  hypersurface in the coarse moduli space of Enriques surfaces. Over any algebraically closed field of characteristic $p\ne 2$ one can show that a nodal Enriques surface is isomorphic to a Reye congruence of lines in $\bbP^3$. The moduli space of Reye congruences is an irreducible variety of dimension 9. On the other hand, the moduli space of  Enriques sextics is of dimension 10. This shows that a general Enriques surface is unnodal.

There are several invariants that measure how nodal an Enriques surface could be. The first one is the 
\emph{non-degeneracy invariant} $d(S)$ defined in \cite{CD}, p. 182. It is equal to the maximal $k$ such that there exists an isotropic $k$ sequence $(f_1,\ldots,f_k)$ in $\Num(S)$  where each $f_i$ is a nef numerical divisor class. If $S$ is unnodal, then $d(S) = 10$, maximal possible. It is known that $d(S) \ge 3$ if $p\ne 2$. However, no example of a surface with $d(S) = 3$ is known to me.  Note that this result implies, if 
$p\ne 2$,  that any Enriques surface admits a non-degenerate Enriques sextic model or a non-degenerate double octic model.

The next invariant was introduced by Viacheslav Nikulin \cite{Nikulin1}. To define it we assume that $p\ne 2$, or $S$ is a $\bmu_2$-surface. 

Let $\pi:X\to S$ be the K3-cover and  $\iota$ be the fixed-point involution with quotient isomorphic to $S$. Denote by 
$N^+$ ($N^-)$ the subgroup of $\Pic(X)$ that consists of invariant (anti-invariant) divisor classes. It is clear that 
$N^{-}$ is contained in the orthogonal complement $(N^+)^\perp$ in $\Pic(S)$. Also, since $G = \la \iota \ra$ acts freely, $N^+ = \pi^*(\Pic(S))$. Since $N^+$ contains an ample divisor, $N^-$ does not contain $(-2)$-curves. By the Hodge Index Theorem, $N^-$ is negative definite.  The quotient group 
$N^-/\textup{Im}(\iota^*-1)=\Ker(\iota^*+1)/\textup{Im}(\iota^*-1)$ is isomorphic to the  cohomology group $H^1(G,\Pic(X))$, where $G = \la \iota \ra$. The Hochshild-Serre spectral sequence in \'etale cohomology gives a boundary map $d_2:E_2^{1,1} = H^1(G,\Pic(X))\to E_2^{3,0} = H^3(G,\bbG_m(X)) \cong  \Hom(G,\textup{k}^*)$. Its kernel coincides with the kernel of the homomorphism of the Brauer groups $\pi^*:\textup{Br}(S)\to \textup{Br}(X)$, see \cite{Beauville}. It is shown in loc.cit. that $d_2$ coincides with the norm map $\Nm:\Pic(X)\to \Pic(S)$\footnote{Recall that the norm map is defined on invertible sheaves by setting $\Nm(\calL) = \det \pi_*\calL$.}
restricted to $\Ker(\iota^*+1)$ and its image is contained in $\Ker(\pi^*) = \langle K_S\rangle$. It is known that 
$\textup{Br}(X)$ is of order 2 if $K_S\ne 0$ and it is trivial otherwise (see \cite{CD}, Proposition 5.3.5). Thus, the order of $N^-/\textup{Im}(\iota^*-1)$ is at most 4, and in the case when $S$ is a $\bmu_2$-surface, the group is trivial.

Consider the subgroup $N_0^- = \textup{Im}(\iota^*-1)$ of $N^-$. For any $x\in \Pic(X)$, we have 
$\iota^*(x)+x\in \pi^*(\Pic(S))$, hence $(\iota^*(x)+x)^2 = 2x^2+2x\cdot \iota^*(x)\equiv 0\mod 4$, and we obtain that $x\cdot \iota^*(x)$ is even. This implies that $(x-\iota^*(x))^2\equiv 0\mod 4$. Thus the lattice 
$N_0^-(\half)$ is an integral even lattice. Note that $N^+(\half) \cong \bbE_{10}$.

Let us consider elements $\delta_- = (\iota^*-1)(x)$ in $N_0^-$ such that $\delta_-^2 = -4$ and $\delta_+:=(\iota^*+1)(x)$ is equal to $\pi^*(y)$, where $y^2 = -2$. Let $\Delta_S^+$ be the set of such classes $y$.
Note that, if  $\delta_- = (\iota^*-1)(x')$, then $x'= x+z$, where 
$\iota^*(z) = z$, hence $\delta_+= \iota^*(x)+x+2z = \pi^*(y')$, where  $y'=y+2z'$. Thus each $\delta_-$ determines a unique coset in  $\overline{\Num}(S)$. Let $\overline{\Delta}_S^+$ be the subset of such cosets. Let $W(\Delta_S^+)$ be the subgroup of $W_S$ generated by reflections in the classes of elements of $\Delta_S^+$. For any  $\alpha\in W(\Delta_S^+)$, we have $\pi^*(\alpha) = \iota^*(\beta)+\beta$ for some $\beta\in \Pic(S)$, so that, for  and $y\in \Delta_S^+$, we get 
$$\pi^*(s_\alpha(y)) = \pi^*(y+(y,\alpha)\alpha) = \iota^*(x+(y,\alpha)\beta)+(x+(y,\alpha)\beta).$$ 
This shows that $\Delta_S^+$ is invariant with respect to $W(\Delta_S^+)$.

Let
$$\overline{\Num}(S): = \Num(S)/2\Num(S) \cong \overline{\bbE}_{10}:= \bbE_{10}/2\bbE_{10} \cong \bbF_2^{10}.$$
We equip the vector space $\overline{\bbE}_{10}$ with the quadratic form $q:\overline{\bbE}_{10} \to \bbF_2$ defined by 
$$q(x+2\bbE_{10}) = \half  x^2\mod 2.$$
One can show that the quadratic form is non-degenerate and is of even type, i.e. equivalent to the orthogonal direct sum of five  hyperbolic planes $x_1x_2+x_3x_4+\cdots+x_9x_{10}$. Its orthogonal group is denoted by $\Or^+(10,\bbF_2)$. It contains a simple subgroup of index 2. We denote by $(x,y)$ the value of the associated symmetric form $b(x,y) = q(x+y)+q(x)+q(y)$ on a 
pair $x,y\in \overline{\Num}(S)$.

Let us identify $\overline{\Num}(S)$ with $\overline{\bbE}_{10}$. Nikulin defines the $r$-invariant of $S$ as the subset $\overline{\Delta_S}^+$ of $\overline{\bbE}_{10}$ equal to the image of $\Delta_S^+$ in $\overline{\Num}(S)$. 
This is a subset $J$ of $\bbF_2^{10}$ satisfying two properties
\begin{itemize}
\item $J \subset q^{-1}(1)$;
\item for any $r\in J, s_r(J)= J$, where $s_r(x) = x+(x,r)r$.
\end{itemize} 

Note that $\Delta_S^+$ contains divisor classes $y$ with $y^2 = -2$ such that $y\equiv R \mod 2\Num(S)$ for some $(-2)$-curve $R$ (it is easy to see, using Riemann-Roch on $X$,  that $y$ or $-y$ is effective). So, $\overline{\Delta_S}^+$ contains the set 
$\overline{\calR_S}$ of cosets of $(-2)$-curves on $S$ and the larger set $\overline{\calR_S}'$ obtained from this set by applying $s_r$, where 
$r\in \overline{\calR_S}$. I do not know whether this larger set coincides with $\overline{\Delta_S}^+$.

Another invariant, the $R$-invariant is defined by Nikulin as follows.  Let $K$ to be the sublattice of $N_0^-(\half)$ spanned by the classes $\delta_-$. It is a negative definite lattice spanned by vectors of norm $-2$. It follows that it is a root lattice, the  orthogonal sum of root lattices of types $A,D,E$. We also have a homomorphism $\phi:K/2K \to \overline{\Num}(S)$ that sends $\delta_-$ to $y+2\Num(S)$, where $\pi^*(y) = \delta_+$. Note that the image of $\phi$ is the linear subspace  spanned by $\overline{\Delta_S}^+$ and the image of the cosets of the $\delta_-$'s is the set $\overline{\Delta_S}^+$. The \emph{Nikulin $\sfR$-invariant} is the pair $(K,H)$, where $H = \Ker(\phi)$.  

A slightly different definition of the $R$-invariant was given by S. Mukai \cite{Mukai2}. He considers the kernel of the norm map $\Nm:\Pic(X)\to \Pic(S)$ and defines the \emph{root system} of $S$ as a sublattice of $\Ker(\Nm)(\half)$ generated by vectors with norm $(-2)$.

Over $\bbC$, one can use the theory of periods of K3 surfaces to show that Enriques surfaces with $\rank K = r$ form a codimension $r$ subvariety in the moduli space.

If $p = 2$ and $S$ is not a $\bmu_2$-surface, we still have the subset 
$\overline{\calR_S}'$ of $\bbF_2^{10}$ and we
define the \emph{r-invariant} of $S$ as the smallest subset $\sfr(S)$ of $\overline{\calR_S}'$ such that any $\overline{\calR_S}'$ can be written as a sum of elements from $\sfr(S)$. We picture $\sfr(S)$ as a graph with vertices in $\sfr(S)$ and the 
edges connecting two elements $x,y$ in $\sfr(S)$ such that $(x,y) = 1$.

\xy (0,5)*{};(0,-30)*{};
(0,0)*{\bullet};(-20,0)*{\#\sfr(S) = 1:};
(-20,-10)*{\#\sfr(S) = 2:};(0,-10)*{\bullet};(10,-10)*{\bullet};
(0,-10)*{};(10,-10)*{}**\dir{-};(30,-10)*{\bullet};(40,-10)*{\bullet};
(-6,-10)*{(a)};(23,-10)*{(b)};
(-20,-20)*{\#\sfr(S) = 3:};(0,-20)*{\bullet};(10,-20)*{\bullet};(5,-25)*{\bullet};
(0,-20)*{};(10,-20)*{}**\dir{-};(0,-20)*{};(5,-25)*{}**\dir{-};(10,-20)*{};(5,-25)*{}**\dir{-};
(-6,-20)*{(a)};(20,-20)*{(b)};(43,-20)*{(c)};(68,-20)*{(d)};
(25,-20)*{\bullet};(35,-20)*{\bullet};(30,-25)*{\bullet};
(25,-20)*{};(35,-20)*{}**\dir{-};
(50,-20)*{\bullet};(60,-20)*{\bullet};(55,-25)*{\bullet};
(75,-20)*{\bullet};(85,-20)*{\bullet};(80,-25)*{\bullet};
(75,-20)*{};(85,-20)*{}**\dir{-};(75,-20)*{};(80,-25)*{}**\dir{-};
\endxy

An Enriques surface is called a \emph{general nodal} if $\#\sfr(S) = 1$, i.e. any two $(-2)$-curves are congruent modulo $2\Num(S)$. In terms of the $\sfR$-invariant, it means that $(K,H) = (A_1,\{0\})$. The following theorem gives equivalent characterizations of  general nodal surfaces \cite{CDL}.

\begin{theorem}\label{T3.4}  The following properties are equivalent.
\begin{itemize}
\item[(i)] $S$ is a general nodal Enriques surface;
\item[(ii)]  Any genus one fibration on $S$ contains at most one  reducible fiber that consists of two irreducible components. A half-fiber is irreducible.
\item[(ii')] Any genus one fibration on $S$ contains at most one  reducible fiber that consists of two irreducible components.
\item[(iii)] Any two $(-2)$-curves are $f$-equivalent.
\item[(iv)]  For any  Fano polarization $h$, the set $\Pi_h = \{R\in \calR_S:R\cdot h\le 4\}$ consists of one element.
\item[(v)] For any $d\le 4$, $S$ admits a Fano polarization $h$ such that 
$\Pi_h = \{R\}$, where $R\cdot h = d$.
\item[(vi)] A genus one pencil that admits a smooth rational curve as a 2-section does not contain reducible fibres. 
\end{itemize} 
\end{theorem}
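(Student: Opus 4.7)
My plan is to prove the equivalences through (i) $\Leftrightarrow$ (iii), (i) $\Rightarrow$ (ii) $\Rightarrow$ (ii') $\Rightarrow$ (i), (i) $\Leftrightarrow$ (vi), and separately (i) $\Leftrightarrow$ (iv) $\Leftrightarrow$ (v). The conceptual hub is the reduction $\Num(S) \twoheadrightarrow \overline{\Num}(S) \cong \overline{\bbE}_{10}$: the general nodal condition $\#\sfr(S) = 1$ means that all $(-2)$-curves represent a single element of $\overline{\calR_S}'$. Interpreting $f$-equivalence as the equivalence relation generated by agreement in $\overline{\calR_S}'$ under the reflections $s_r$, the equivalence (i) $\Leftrightarrow$ (iii) is then a direct reformulation of the definition of $\sfr(S)$.

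For (i) $\Rightarrow$ (ii), I would argue by contraposition: any reducible fibre $\sum m_i R_i$ of a genus one pencil $|2F|$ produces a linear relation $\sum m_i[R_i] \equiv 0 \bmod 2\Num(S)$, and a reducible half-fibre $F = R_1+R_2$ gives $[R_1]+[R_2] \equiv 0$. Two such relations (from two reducible fibres, or one fibre of three or more components, or a fibre plus a reducible half-fibre) would be linearly independent in $\overline{\Num}(S)$, forcing at least two mod-$2$ classes of $(-2)$-curves and contradicting $\#\sfr(S)=1$. The implication (ii) $\Rightarrow$ (ii') is trivial, and (ii') $\Rightarrow$ (i) is the contrapositive: two non-$f$-equivalent $(-2)$-curves $R, R'$ can be arranged, via Riemann--Roch on the Enriques lattice and the action of $W_S^n$, as components of distinct reducible fibres or of a single reducible fibre with at least three components, violating (ii').

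For (i) $\Leftrightarrow$ (vi) I would reason similarly: if $|2F|$ admits a smooth rational $2$-section $R$ and a reducible fibre $\sum m_i R_i$, then $R \cdot F = 1$ and $R \notin \{R_i\}$, so the relation $\sum m_i [R_i] \equiv 0$ combined with $R \cdot \sum m_i R_i = 2$ produces a second mod-$2$ class of $(-2)$-curves in $\overline{\calR_S}'$; conversely, from two non-$f$-equivalent $(-2)$-curves one builds a pencil with one as a $2$-section and the other in a reducible fibre. For (i) $\Leftrightarrow$ (iv) $\Leftrightarrow$ (v), I would fix a decomposition $3h \sim E_1 + \cdots + E_{10}$ of a Fano polarization $h$ into an isotropic $10$-sequence. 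Intersecting a $(-2)$-curve $R$ with the $E_i$ bounds $R \cdot h$ and, for $R \cdot h \le 4$, determines the mod-$2$ class of $R$ up to the reflection action on $\overline{\calR_S}'$. Hence $\Pi_h$ is a singleton exactly when there is a single mod-$2$ class, giving (iv) $\Leftrightarrow$ (i); property (v) then follows by choosing isotropic $10$-sequences adapted to each prescribed value $d \in \{1,2,3,4\}$ of $R \cdot h$.

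The principal obstacle will be the contrapositive directions (ii') $\Rightarrow$ (i) and (vi) $\Rightarrow$ (i), where the purely numerical hypothesis of non-$f$-equivalence must be converted into the existence of a geometric configuration, namely a genus one pencil with a prescribed $2$-section and reducible fibres. This conversion requires Riemann--Roch on the Enriques lattice together with control of the effective cone via the reflection group $W_S^n$, and is where the combinatorics of $\bbE_{10}$ and of the lifts of $(-2)$-curves to the K3-cover play the decisive role.
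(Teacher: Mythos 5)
First, a caveat: the paper does not actually prove Theorem \ref{T3.4}; it states it and refers to \cite{CDL}, so your attempt can only be judged on its own merits. Your overall architecture (reduce everything to the single mod-$2$ class of $(-2)$-curves in $\overline{\Num}(S)$) is the right starting point, but there are concrete gaps. The most serious is the claim that (i) $\Leftrightarrow$ (iii) is ``a direct reformulation of the definition of $\sfr(S)$.'' It is not: $f$-equivalence is a geometric relation requiring an actual chain of genus one pencils with $R_i+R_{i+1}\in|2E_i|$. The easy direction is (iii) $\Rightarrow$ (i), since $R+R'\sim 2E$ forces $R\equiv R'\bmod 2\Num(S)$. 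The converse requires showing that whenever $R\equiv R'\bmod 2\Num(S)$ one can connect $R$ to $R'$ by such a chain; in particular one must produce pencils, which needs $R_i\cdot R_{i+1}=2$ and an effectivity/base-point analysis, not just a congruence. Treating $f$-equivalence as if it were defined lattice-theoretically begs precisely the geometric content of the theorem.

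Second, your argument for (i) $\Rightarrow$ (ii) via ``linear independence of the relations $\sum m_i[R_i]\equiv 0$'' breaks down in the critical case of two reducible fibres each with exactly two components: the relations $[R_1]+[R_2]\equiv 0$ and $[R_3]+[R_4]\equiv 0$ are both trivially satisfied when all four classes equal the single class $r$, so they force nothing. The correct obstruction is the evenness of $\bbE_{10}$: if $R\equiv R'\bmod 2\Num(S)$ and $R\ne R'$, then $R-R'=2x$ gives $x^2=-1-\tfrac{1}{2}R\cdot R'$, whence $R\cdot R'\equiv 2\pmod 4$; in particular two distinct $(-2)$-curves on a general nodal surface can be neither disjoint nor meet with odd multiplicity. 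This single observation is what kills fibres with $\ge 3$ components, two reducible fibres in one fibration, reducible half-fibres (which would make the primitive class $E$ divisible by $2$), and a reducible fibre meeting a rational $2$-section (giving (vi)); your proposal never isolates it. Finally, all the converse directions --- (ii$'$) $\Rightarrow$ (i), (vi) $\Rightarrow$ (i), and the Fano-polarization statements (iv), (v), where one must realize two incongruent $(-2)$-curves inside a single fibration or bound $\Pi_h$ using the decomposition $3h\sim E_1+\cdots+E_{10}$ and nefness of the $E_i$ --- are only gestured at; you candidly flag them as the ``principal obstacle,'' but they constitute the bulk of the proof and are not supplied.
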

Here two $(-2)$-curves $R$ and $R'$ are called \emph{$f$-equivalent}\index{$f$-equivalence} if there exists a sequence of genus one  
fibrations $|2E_1|,\ldots,|2E_{k-1}|$ and a sequence of $(-2)$-curves $R_1 = R,\ldots,R_{k}= R'$ such that 
$$R_1+R_2\in |2E_1|, R_2+R_3\in |2E_2|,\ldots,  R_{k-1}+R_k\in |2E_{k-1}|.$$
Obviously, the $f$-equivalence is an equivalence relation on the set of nodal curves.

\section{Automorphisms of Enriques surfaces}

One of the main special features of Enriques surfaces is the richness of its symmetry group, i.e the group $\Aut(S)$ of birational automorphisms. Since $S$ is a minimal model, this group coincides with the group of biregular automorphisms. The  group of biregular automorphisms of any projective algebraic variety $X$ over $\Bbbk$ is the group of $\Bbbk$-points of a group scheme $\mathbf{Aut}_{X/\Bbbk}$ of locally finite type. This means that the connected component of the identity 
$\mathbf{Aut}_{X/\Bbbk}^0$ of $\mathbf{Aut}_{X/\Bbbk}$ is an algebraic group scheme over $\Bbbk$, and the group of connected components is  countable. The tangent space of $\mathbf{Aut}_{X/\Bbbk}^0$ at the identity point is isomorphic to the space of regular vector fields
$H^0(X,\Theta_{X/\Bbbk})$. All of this can be found, for example, in \cite{MO}.

It is known that in the case of an Enriques surface  $\dim H^0(S,\Theta_{S/\Bbbk})\le 1$ and the equality takes place only if $p = 2$ and $S$ is an $\balpha$-surface or an exceptional classical Enriques surface, the latter surfaces were described in \cite{Nick}. 

\begin{theorem} Let $S$ be an Enriques surface. Then  $\dim \mathbf{Aut}_{S/\Bbbk}^0 = 0$.  If  $h^0(\Theta_{S/\Bbbk}) = 0$,  then $\mathbf{Aut}_{S/\Bbbk}$ is reduced  and $\mathbf{Aut}_{S/\Bbbk}^{0}$ is trivial.  
\end{theorem}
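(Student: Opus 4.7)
The plan is to combine the general inequality
$\dim \mathbf{Aut}^{0}_{S/\Bbbk}\le \dim T_{e}\mathbf{Aut}^{0}_{S/\Bbbk}=h^{0}(S,\Theta_{S/\Bbbk})\le 1$---the last bound being the result quoted just above the theorem---with a direct exclusion of the one-dimensional case. The second assertion then drops out at once from the tangent-space vanishing: if $h^{0}(\Theta_{S/\Bbbk})=0$, then $T_{e}\mathbf{Aut}^{0}_{S/\Bbbk}=0$, so by Nakayama the local ring of $\mathbf{Aut}^{0}_{S/\Bbbk}$ at $e$ equals $\Bbbk$; translation-invariance transports this to every point of the group scheme, making $\mathbf{Aut}^{0}_{S/\Bbbk}$ \'etale, connected, and over the algebraically closed $\Bbbk$, hence equal to $\Spec\Bbbk$. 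Reducedness of $\mathbf{Aut}_{S/\Bbbk}$ follows by translation from the identity.

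For the main assertion I would argue by contradiction: suppose $\dim\mathbf{Aut}^{0}_{S/\Bbbk}=1$. Then the chain $1=\dim\mathbf{Aut}^{0}_{S/\Bbbk}\le\dim T_{e}\le h^{0}(\Theta_{S})\le 1$ collapses, so $\mathbf{Aut}^{0}_{S/\Bbbk}$ is smooth at $e$ and hence smooth everywhere. A smooth connected one-dimensional algebraic group over an algebraically closed field is isomorphic to one of $\bbG_{a}$, $\bbG_{m}$, or an elliptic curve $E$, and I would rule out each case in turn.

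If $\mathbf{Aut}^{0}_{S/\Bbbk}$ is $\bbG_{a}$ or $\bbG_{m}$, generic orbit closures on $S$ are rational curves, so $S$ is uniruled; but uniruled surfaces have Kodaira dimension $-\infty$, contradicting $h^{0}(2K_{S})=1$, which comes from $2K_{S}\sim 0$. If $\mathbf{Aut}^{0}_{S/\Bbbk}\cong E$ is an elliptic curve, I would fix a very ample $L$ on $S$ with associated embedding $\phi_{L}\colon S\hookrightarrow\bbP(H^{0}(S,L)^{\vee})$. The morphism $E\to\bfpic_{S/\Bbbk}$, $g\mapsto g^{*}L$, lies in the connected component $L+\bfpic^{0}_{S/\Bbbk}$ of $L$; since $\Pic^{0}(S)=\bfpic^{0}_{S/\Bbbk}(\Bbbk)=0$ for every Enriques surface---$\bmu_{2}(\Bbbk)$ and $\balpha_{2}(\Bbbk)$ are both trivial in characteristic $2$---we get $g^{*}L\sim L$ for every $g\in E$. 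This yields a projective representation $\rho\colon E\to\PGL(H^{0}(S,L))$ for which $\phi_{L}$ is equivariant; the image of $\rho$ is a complete subvariety of the affine group $\PGL$, hence a single point, so $\rho$ is trivial. Since $\phi_{L}$ is a closed embedding, $E$ acts trivially on $S$, contradicting the injection $E\hookrightarrow\mathbf{Aut}^{0}_{S/\Bbbk}$.

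The main obstacle is the elliptic-curve case, and the argument rests on the triviality of $\Pic^{0}(S)$ for every Enriques surface across all characteristics. If this were to fail---for instance if $E$ could translate $L$ by a non-trivial element of $\Pic^{0}$---then the projective representation trick would collapse and a subtler approach, via the Albanese and the structure of genus one fibrations, would be required.
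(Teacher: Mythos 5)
Your proof is correct, and its skeleton (reduce to a smooth connected one\-/dimensional group, then eliminate $\bbG_a$, $\bbG_m$, and the elliptic curve separately) matches the paper's. The second assertion and the linear case agree in substance: the paper invokes Rosenlicht's theorem to produce a $G$-invariant open set that is a principal homogeneous space over its quotient and concludes that $S$ is ruled, while you pass directly to uniruledness via generic orbits; both contradict Kodaira dimension $0$. Where you genuinely diverge is the elliptic-curve case. The paper uses the existence of a genus one fibration with singular fibres: a connected complete group preserves each singular fibre, hence fixes a point $x$ on it, acts linearly on each $\frakm_{S,x}^k/\frakm_{S,x}^{k+1}$, hence trivially by completeness, hence trivially on $\widehat{\calO}_{S,x}$ and therefore on $S$. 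You instead linearize the action: since $\bfpic_{S/\Bbbk}^0(\Bbbk)=0$ in every characteristic (the group scheme is $0$, $\bmu_2$ or $\balpha_2$), one gets $g^*L\sim L$ for a very ample $L$, hence a projective representation of $E$ in the affine group $\PGL(H^0(S,L))$ whose image, being complete and connected, is trivial. Your route is softer --- it avoids the input that genus one fibrations on $S$ exist and have degenerate fibres, and it really shows that $(\mathbf{Aut}_{S/\Bbbk}^{0})_{\red}$ is affine whenever $\bfpic^0$ has no nontrivial $\Bbbk$-points --- at the cost of the standard but unstated seesaw step upgrading the pointwise isomorphisms $g^*L\cong L$ to an algebraic morphism $E\to\PGL(H^0(S,L))$. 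Your remark that the collapse of $1=\dim\mathbf{Aut}^0_{S/\Bbbk}\le\dim T_e\le h^0(\Theta_{S/\Bbbk})\le 1$ forces smoothness also tidies up a point the paper leaves implicit, since the paper phrases its case analysis under the hypothesis that $\mathbf{Aut}_{S/\Bbbk}$ is reduced.
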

 
\begin{proof} The second assertion follows immediately from the discussion in above.   Suppose $H^0(S,\Theta_{S/\Bbbk})\ne 0$. If $\mathbf{Aut}_{S/\Bbbk}$ is reduced then $\mathbf{Aut}_{S/\Bbbk}^0$  is a one-dimensional connected algebraic group $G$ over $\Bbbk$. There are three possibilities: $G = \bbG_m, \bbG_a$, or $G$ is an elliptic curve. A connected algebraic group acts trivially on the N\'eron-Severi group of $S$. Since $S$ has a non-trivial genus  one fibration with some rational fibers, the group $G$ preserves the set of singular fibers, and being connected, preserves any singular fiber. If $G$ is an elliptic curve, then $G$ must fix any point $x$ on a singular fiber. 
Thus $G$ acts linearly on any $\frakm_{S,x}^k/\frakm_{S,x}^{k+1}$, and being complete, it act trivially. This implies that $G$ acts trivially on the completion of the local ring $\calO_{S,x}$, hence on the ring itself, hence on its fraction ring, hence on $S$.

Suppose $G$ is a linear group of positive dimension acting on an irreducible algebraic variety $X$. We can always choose a one-dimensional subgroup of $G$ and assume that it acts freely on $X$. Then, by Rosenlicht's Theorem, there exists a $G$-invariant open subset $U$ of $X$ such that the geometric quotient $U\to U/G$ exists and its fibers are orbits of $G$. As is well-known this implies that $U$ is a principal homogenous space over $U/G$ (see, for example, \cite{Mumford}, Proposition 0.9). Thus $U$, and hence $X$, is  a ruled variety, i.e. it is birationally isomorphic to $\bbP^1\times U/G$.  Applying this to $S$, we find a contradiction.
\end{proof}

Enriques himself realized that $\Aut(S)$ is an infinite discrete group. In his paper \cite{Enriques2} of 1906 he remarks that any $S$ containing a general pencil of elliptic curves has infinite  automorphism group. The paper ends with the  question whether there exists a special degeneration of the sextic model such that the group of automorphisms is finite.

 A usual way to investigate $\Aut(S)$ is to consider its natural representation by automorphisms of some vector space or of 
 an abelian group. In our case, this would be $\Num(S)$. Since automorphisms preserve the intersection form, we have a homomorphism
$$\rho:\Aut(S) \to \Or(\Num(S)) \cong \Or(\bbE_{10}).$$
From now on, we fix an isomorphism $\Num(S)\cong \bbE_{10}$ and  identify these two lattices. Since automorphisms preserve the ample cone in $\Num(S)$, the image does not contain $-1_{\bbE_{10}}$, hence it is contained in the  preimage $W_S$ of the reflection group $W(\bbE_{10})$ in $\Or(\Num(S))$. It is a subgroup of $\Or(\Num(S))$ generated by reflections in numerical divisor classes $x$ with $x^2 = -2$.  The kernel of $\rho$ preserves  the set $\{h,h+K_S\}$, where $h$ is a very ample divisor 
class. Thus it preserves $2h$, hence it is contained  in a group of projective automorphisms of some projective space $\bbP^n$. It must be a linear algebraic group, hence  it is a finite group.

  The reduction homomorphism 
$\bbE_{10}\to \overline{\bbE}_{10} = \bbE_{10}/2\bbE_{10}$ defines a homomorphism of the groups
$W(\bbE_{10})\to \Or^+(10,\bbF_2)$. Let 
$$W(\bbE_{10})(2): = \Ker(W(\bbE_{10}) \to \Or^+(10,\bbF_2)).$$
It is called the \emph{2-level congruence subgroup} of $W(\bbE_{10})$. The preimage of $W(\bbE_{10})(2)$ in $W_S$ will be denoted by $W_S(2)$. It does not depend on a choice of an isomorphism $W_S\cong W(\bbE_{10})$.

\begin{proposition}[A. Coble] The subgroup $W(\bbE_{10})(2)$ is the smallest normal subgroup containing the involution 
$\sigma=1_U\oplus (-1)_{E_8}$ for some (hence any) orthogonal decomposition $\bbE_{10} = U\oplus E_8$.
\end{proposition}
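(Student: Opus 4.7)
The plan is to establish the two inclusions $N \subseteq W(\bbE_{10})(2)$ and $W(\bbE_{10})(2) \subseteq N$, where $N$ denotes the normal closure of $\sigma$ in $W(\bbE_{10})$.

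For the easy inclusion, first check $\sigma \in W(\bbE_{10})$: the longest element $w_0$ of $W(E_8)$ equals $-1_{E_8}$ (a classical fact, equivalent to all exponents of $E_8$ being odd), so $\sigma = 1_U \oplus w_0 \in W(E_8) \subseteq W(\bbE_{10})$. Next, $\sigma - \id$ sends $(x,y) \in U \oplus E_8$ to $(0, -2y) \in 2E_8 \subseteq 2\bbE_{10}$, so $\sigma$ reduces to the identity in $\Or^+(10,\bbF_2)$, i.e.\ $\sigma \in W(\bbE_{10})(2)$. Normality of $W(\bbE_{10})(2)$ then gives $N \subseteq W(\bbE_{10})(2)$. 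The parenthetical ``some (hence any)'' reduces to the transitivity of $W(\bbE_{10})$ on orthogonal decompositions $\bbE_{10} = U \oplus E_8$: such a decomposition is determined by its primitive isotropic $U$-summand, and by Eichler's theorem for indefinite even unimodular lattices the Weyl group acts transitively on primitive isotropic planes in $\bbE_{10}$ (the sign factor in $\Or(\bbE_{10}) = W(\bbE_{10}) \times \{\pm 1\}$ is irrelevant for the involution $\sigma$).

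For the hard inclusion $W(\bbE_{10})(2) \subseteq N$, the strategy is to analyze the induced surjection $\bar\pi: W(\bbE_{10})/N \twoheadrightarrow \Or^+(10,\bbF_2)$ and show it is an isomorphism. The most direct route is to prove that $W(\bbE_{10})(2)$ is generated, as a subgroup, by the involutions $\sigma_{U',E_8'} := 1_{U'} \oplus (-1)_{E_8'}$ over all orthogonal decompositions $\bbE_{10} = U' \oplus E_8'$; once this is known, all such involutions already lie in $N$ by the conjugacy argument above, so $W(\bbE_{10})(2) \subseteq N$. Using that every element of $W(\bbE_{10})(2)$ is a product of reflections $s_{\alpha_1}\cdots s_{\alpha_k}$ whose mod-$2$ images satisfy $\bar s_{\alpha_1}\cdots \bar s_{\alpha_k} = \id$ (so the $\bar\alpha_i$ can be paired up into coincidences modulo $2\bbE_{10}$), this reduces to the claim that $s_\alpha s_{\alpha'} \in N$ whenever $\alpha \equiv \alpha' \pmod{2\bbE_{10}}$.

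The main obstacle is this final claim. Given congruent roots $\alpha, \alpha' = \alpha - 2\gamma$ with $\gamma \in \bbE_{10}$, one must write $s_\alpha s_{\alpha'}$ as an explicit product of conjugates of $\sigma$. The $W(\bbE_{10})$-action partitions such pairs into finitely many orbits, classified by $\bar\alpha = \bar\alpha' \in q^{-1}(1)$ together with the $\overline{\bbE}_{10}$-class of $\gamma$; selecting representatives, one exhibits a decomposition $\bbE_{10} = U' \oplus E_8'$ with both $\alpha$ and $\gamma$ in $E_8'$, and uses that the associated $\sigma_{U',E_8'}$ and a nearby variant (obtained by conjugating with a reflection taking $\alpha$ to $\alpha'$ within $E_8'$) combine to produce $s_\alpha s_{\alpha'}$. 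Organizing the orbit analysis and verifying the cases constitutes the core technical work. An alternative, more conceptual path, essentially that of Coble, is to write down the Coxeter presentation of $W(\bbE_{10})$ on the generators $s_{\alpha_0}, \dots, s_{\alpha_9}$, impose the single extra relation $\sigma = 1$ (expanded in terms of the Coxeter generators), and verify that the resulting group has order exactly $|\Or^+(10, \bbF_2)|$; since $\bar\pi$ is already surjective, it is forced to be an isomorphism.
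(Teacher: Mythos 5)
Your first half is fine: $\sigma$ lies in $W(\bbE_{10})$ (longest element of $W(E_8)$), it reduces to the identity mod $2\bbE_{10}$, and Eichler/transitivity handles ``some (hence any).'' The problem is the hard inclusion $W(\bbE_{10})(2)\subseteq N$, where your argument has a genuine gap at the very first reduction. You claim that if $w\in W(\bbE_{10})(2)$ is written as $s_{\alpha_1}\cdots s_{\alpha_k}$ with $\bar s_{\alpha_1}\cdots\bar s_{\alpha_k}=\id$ in $\Or^+(10,\bbF_2)$, then ``the $\bar\alpha_i$ can be paired up into coincidences modulo $2\bbE_{10}$.'' This is false: relations among transvections in $\Or^+(10,\bbF_2)$ are not normally generated by cancellations of equal generators (already $(\bar s_{a}\bar s_{b})^3=\id$ when $b(a,b)=1$ involves each vector an odd number of times). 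Consequently, even a complete proof that $s_\alpha s_{\alpha'}\in N$ for all roots $\alpha\equiv\alpha'\pmod{2\bbE_{10}}$ would only show that $\bar\alpha\mapsto s_\alpha N$ is well defined on generators; it would not produce a homomorphism $\Or^+(10,\bbF_2)\to W(\bbE_{10})/N$ splitting the surjection, so it does not yield $W(\bbE_{10})(2)\subseteq N$. Moreover, that key claim is itself only sketched (``constitutes the core technical work''), and the proposed construction breaks in concrete cases: if $\alpha'=\alpha-2\gamma$ with $\gamma$ isotropic (which forces $(\alpha,\gamma)=0$ and does occur), then $\gamma$ cannot lie in any negative definite summand $E_8'$, so there is no decomposition $U'\oplus E_8'$ containing both $\alpha$ and $\gamma$ in $E_8'$. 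Your fallback --- impose $\sigma=1$ in a Coxeter presentation and count the order of the quotient --- is exactly Coble's original computational route, which the paper describes as incomplete and whose repair (by Looijenga) is a long computation; invoking it without carrying it out is not a proof.

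For contrast, the paper's argument (due to Allcock) avoids working with individual congruent pairs of roots altogether. It fixes a primitive isotropic vector $f$, identifies the stabilizer $W_f\cong E_8\rtimes W(E_8)$, and computes explicitly that conjugating $\sigma$ by the translations $\phi_v$ gives $x\mapsto -x+2(v\cdot x)f$, so that $\Gamma\cap W_f=\phi(2E_8){:}2$ and $W_f/(\Gamma\cap W_f)$ injects into $\Or^+(10,\bbF_2)$. It then shows $H=\langle W_f,\Gamma\rangle$ equals the full preimage $W_{\bar f}$ of the stabilizer of $\bar f$ by an orbit count: a root subdiagram with three affine subdiagrams $\tilde E_8,\tilde E_8,\tilde D_8$ bounds the number of $H$-orbits of primitive isotropic vectors above by $3$, while $W_{\bar f}$ visibly has at least $3$, and the stabilizers of $f$ agree. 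This forces $\Gamma=\ker\bigl(W_{\bar f}\to \Or(\bar{\bbE}_{10})\bigr)=W(\bbE_{10})(2)$. If you want to salvage your approach, you would need either to verify all the defining relations of $\Or^+(10,\bbF_2)$ lift into $N$, or to switch to an argument of the stabilizer/orbit-counting type.
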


\begin{proof} A proof suggested by Eduard Looijenga and following Coble's incomplete proof is  computational. It  is reproduced in \cite{CD}, Chapter 2, \S 10. The following nice short proof is due to Daniel Allcock.

Let $\Gamma$ be the minimal normal subgroup containing $\sigma$. It is generated by the conjugates of $\sigma$ in 
$W = W(\bbE_{10})$. Let $(f,g)$ be the standard basis of the hyperbolic plane $U$. If $\alpha_0,\ldots,\alpha_7$ is the basis of $E_8$ corresponding to the subdiagram of type $E_8$ of the Coxeter diagram of the Enriques lattice $\bbE_{10}$, then we may take 
$$f = 3\alpha_0+2\alpha_1+4\alpha_2+6\alpha_3+5\alpha_4+4\alpha_5+3\alpha_6+2\alpha_7+\alpha_8,$$
and $g = f+\alpha_9$. The stabilizer $W_f$ of $f$ in 
$W$ is the semi-direct product $E_8\rtimes W(E_8) \cong W(E_{9})$, where $E_9$ is the affine group of type $E_8$ and $W(E_9)$ is its Weyl group, the reflection group with the Coxeter diagram of type $T_{2,3,6}$. The image $\phi_v$ of 
$v\in E_8 = U^\perp$ under the map $\phi:E_8\to  W_f$ is the transformation 
$$\phi_v:x\mapsto x-(\frac{v^2}{2}(f,x)+(v,x))f+(x,f)v.$$
The inclusion of $W(E_8)$ in $W(E_9)$ is natural, it consists of compositions of the reflections in the  roots $\alpha_0,\ldots,\alpha_7$. In particular, any $w\in W(E_8)$ acts identically on $E_8^\perp = U$. The image of $\sigma$ in $W(E_9)$ is equal to 
$-\id_{E_8}\in W(E_8)\subset W(E_9)$. Let us compute the $\phi_v$-conjugates of $\sigma$. If $x\in E_8$, we have 
$$\phi_v\circ \sigma\circ \phi_{-v}(x)  = \phi_v(\sigma(x+(v\cdot x)f))$$
$$= 
\phi_v(-x+(v\cdot x)f) = (-x+(v\cdot x)f)+(v\cdot x)f = -x+2(v\cdot x)f.$$
Thus the intersection of $\Gamma$ with $W_f$ is equal to $\phi(2E_8):2$. The quotient $W_f/\Gamma\cap W_f$ injects into $\Or(\bar{\bbE}_{10}) \cong \Or^+(10,\bbF_2)$.  

Let us consider the subgroup $H$ generated by $W_f$ and $\Gamma$. Since $W_f$ normalizes $\Gamma\cap W_f$, the kernel of 
the homomorphism $H\to \Or(\bar{\bbE}_{10})$ coincides with $\Gamma$. To finish the proof it suffices to show that 
$H$ coincides with the preimage $W_{\bar{f}}$ in $W$ of the stabilizer subgroup of the image $\bar{f}$ of $f$ in $\Or(\bar{\bbE}_{10})$. Indeed, the kernel of $W_{\bar{f}} \to \Or(\bar{\bbE}_{10})$ is equal to $W(\bbE_{10})(2)$ and hence coincides with $\Gamma$.

Let us consider a sublattice $L$ of $\bbE_{10}$ generated by the roots $\alpha_0,\ldots,\alpha_8$, $\alpha_9'$, where 
$\alpha_9' = \alpha_8+2g-f$. The Dynkin diagram of this basis is the following

\xy (-30,10)*{};(-30,-15)*{};
@={(0,0),(10,0),(20,0),(30,0),(40,0),(50,0),(60,0),(70,0),(60,-10),(20,-10)}@@{*{\bullet}};
(0,0)*{};(70,0)*{}**\dir{-};(60,0)*{};(60,-10)*{}**\dir{-};
(20,0)*{};(20,-10)*{}**\dir{-};
(0,3)*{\alpha_1};(10,3)*{\alpha_2};(20,3)*{\alpha_3};(30,3)*{\alpha_4};(40,3)*{\alpha_5};(50,3)*{\alpha_6};(60,3)*{\alpha_7};
(70,3)*{\alpha_8};(63,-10)*{\alpha_9'};(23,-10)*{\alpha_0};
\endxy
Here all the roots, except $\alpha_9'$, are orthogonal to $f$. So, $H$ contains the reflections defined by these roots. 
Also the root $\alpha_8-f$ is orthogonal to $f$, and $\alpha_9'$ is transformed  to it under the conjugate of the group $\phi(2E_8)$ stabilizing $g$ (instead of $f$). So $H$ contains $s_{\alpha_9'}$ too. The Coxeter diagram contains three subdiagrams of  
affine types $\tilde{E}_8,\tilde{E}_8$ and $\tilde{D}_8$. The Weyl group is a crystallographic group with a Weyl chamber being a simplex of finite volume with 3 vertices at the boundary. This implies that $H$ has at most 3 orbits of ($\pm$ pairs) of primitive isotropic vectors in $\bbE_{10}$. On the other hand, $W_{\bar{f}}$ contains $H$ and has at least three orbits of them, because the stabilizer of $\bar{f}$ in $\Or(\bar{\bbE}_{10})$ has three orbits of isotropic vectors (namely, $\{\bar{f}\}$, the set of isotropic vectors distinct from $\bar{f}$ and orthogonal to $\bar{f}$, and the set of isotropic vectors not orthogonal to $\bar{f}$). This implies that the set of orbits of primitive isotropic vectors of $H$ and $W_{\bar{f}}$ is the same. Since the stabilizers of $f$ in these two groups are both equal to $W_f$, it follows that $H = W_{\bar{f}}$.

\end{proof}

\begin{theorem} Let $S$ be an unnodal Enriques surface. Then $\rho:\Aut(S)\to W_S$ is injective and the image  contains $W_S(2)$.
\end{theorem}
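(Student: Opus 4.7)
The plan is to split the argument into two independent parts: injectivity of $\rho$ on $\Aut(S)$, and the containment $W_S(2) \subseteq \rho(\Aut(S))$. The second part will use the preceding proposition in an essential way.

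For injectivity, let $K = \ker(\rho)$ denote the group of numerically trivial automorphisms. I first note that $K$ is finite: any $g \in K$ fixes every ample class $h$, hence preserves $2h$ and acts on $S$ through a subgroup of $\PGL(|2h|^\vee)$, and, combined with the triviality of $\mathbf{Aut}_{S/\Bbbk}^0$ from the previous theorem, this makes $K$ a finite group. To conclude $K = \{1\}$ for unnodal $S$, my plan is to exploit the abundance of genus one pencils coming from the isotropic $10$-sequences in $\Num(S)$: any $g \in K$ preserves every genus one fibration $|2E|$ and acts trivially on its base $\bbP^1$, since $g^*[E] = [E]$. Because $S$ is unnodal, no reducible fibers occur (each irreducible component would be a $(-2)$-curve), so $g$ acts on each fiber as an automorphism of an irreducible curve of arithmetic genus $1$. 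Comparing the actions on two transverse pencils $|2E_1|, |2E_2|$ with $E_1 \cdot E_2 = 1$, and applying the Lefschetz trace formula to $g$ on $H^*(S, \bbQ_\ell)$ (where $g^*$ acts trivially on $H^2$), one obtains a constraint on the fixed locus that is incompatible with $g \neq \id$. This is essentially the argument of Mukai--Namikawa in characteristic zero, with adjustments as needed for positive characteristic.

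For the containment $W_S(2) \subseteq \rho(\Aut(S))$, the preceding proposition reduces the problem to realising the involution $\sigma_{U,E_8} := 1_U \oplus (-1)_{E_8}$ as $\rho(\tau)$ for every orthogonal decomposition $\bbE_{10} = U \oplus E_8$. Such a decomposition is equivalent to a choice of two primitive isotropic vectors $f_1, f_2 \in \Num(S)$ with $f_1 \cdot f_2 = 1$ spanning $U$. Since $S$ is unnodal, the non-degeneracy property $d(S) = 10$ together with Riemann--Roch guarantees that each $f_i$ is the class of a half-fiber $E_i$ of a genus one pencil, so $D := 2E_1 + 2E_2$ is a bielliptic polarization with $D^2 = 8$ and $\Phi(D) = 2$. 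The linear system $|D|$ defines a finite degree two morphism $\phi: S \to \calD_4$ onto the $4$-nodal quartic del Pezzo surface, and I take $\tau$ to be its covering involution.

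To verify $\rho(\tau) = \sigma_{U,E_8}$ I analyse the eigenspaces of $\tau^*$ on $\Num(S) \otimes \bbQ$. The $+1$-eigenspace contains $\phi^*(\Pic(\calD_4) \otimes \bbQ)$, which has rank $2$ and contains $[E_1]$ and $[E_2]$, so it equals $U \otimes \bbQ$. Since $\tau^{*2} = \id$, the $-1$-eigenspace has rank $8$ and is orthogonal to the $+1$-eigenspace; being a primitive rank-$8$ sublattice of the unimodular lattice $\bbE_{10} = U \oplus E_8$ orthogonal to $U$, it must coincide with $E_8$, and so $\tau^* = \sigma_{U,E_8}$ on $\Num(S)$. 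The main obstacle is the triviality of $\ker(\rho)$, where the fixed-point analysis is genuinely global and cannot be reduced to lattice considerations; a secondary technicality is verifying that every orthogonal decomposition $\bbE_{10} = U \oplus E_8$ actually arises from a pair of half-fibers on $S$, which rests on the non-degeneracy results recalled earlier.
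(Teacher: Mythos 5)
Your proposal follows essentially the same route as the paper: injectivity is deferred to Mukai--Namikawa (and its positive-characteristic analogue), and the containment $W_S(2)\subseteq\rho(\Aut(S))$ is obtained by realizing $1_U\oplus(-1)_{E_8}$ for every orthogonal decomposition $\bbE_{10}=U\oplus E_8$ --- equivalently, every $W_S$-conjugate of one such involution --- as the covering involution of the degree-two map $S\to\calD_4$ defined by $|2E_1+2E_2|$, and then invoking the preceding proposition of Coble. Your eigenspace computation identifying $\tau^*$ with $1_U\oplus(-1)_{E_8}$ is exactly the verification the paper dismisses as ``immediate,'' so the two arguments coincide in substance.
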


\begin{proof} In the complex case the assertion about the injectivity of the map follows from the classification of automorphisms that act identically on $\Num(S)$  due to S. Mukai and Y. Namikawa \cite{MN}, \cite{Muk}. Without assumption on the characteristic, one can deduce it from the arguments in  \cite{DolgachevT}. 

Consider the  linear system $|D| = |2E_1+2E_2|$ with $D^2 = 8$. As was explained in section 4, it defines a degree 2 map $S\to \calD_4\subset \bbP^4$, where $\calD_4$ is a 4-nodal  quartic del Pezzo surface. Let $\sigma$ be the deck transformation of the cover and $\sigma_* = \rho(\sigma)\in W_S$.\footnote{Since $S$ has no smooth rational curves, the cover is  a finite separable map of degree 2.} It is immediate that $\sigma_*$ leaves invariant the divisor classes of $E_1$ and $E_2$, and acts as the minus identity on the orthogonal complement of the sublattice $\la E_1,E_2\ra$ generated by $E_1,E_2$. The latter is isomorphic to the hyperbolic plane $U$ and $U^\perp$ is isomorphic to the lattice $E_8$ (sorry for the confusing notation). Now any conjugate of $\sigma_*$ in $W_S$ is also realized by some automorphism. In fact, $w\cdot \sigma_* \cdot w^{-1}$ leaves invariant $w(\la E_1,E_2\ra)$, and the deck transformation corresponding to the linear system $|2w(E_1)+2w(E_2)|$ realizes  $w\cdot \sigma_* \cdot w^{-1}$.

Now we invoke the previous proposition that says that  $W(\bbE_{10})(2)$ is the minimal normal subgroup of $W(\bbE_{10})$ containing $\sigma_*$.

\end{proof}

Here is the history of the theorem. We followed the proof of A. Coble in the similar case when $S$ is a Coble surface. This surface is a degeneration of an Enriques surface, although I do not know whether one can  deduce the result from Coble's theorem.

Over $\bbC$, the proof follows immediately from the \emph{Global  Torelli Theorem} for K3-surfaces. It was first stated by V. Nikulin \cite{Nik} and, independently, by  W. Barth and C. Peters \cite{BP}. Also the Global Torelli Theorem implies that the subgroup generated by $\Aut(S)$ and the subgroup $W_S^n$ generated by reflections in the classes of $(-2)$-curves is of finite index in $W_S$. In particular, $\Aut(S)$ is finite if and only $W_S^n$ is of finite index in $W_S$. If $S$ has no nonzero regular vector fields, then the same is true in any characteristic. This follows easily from the proof of Theorem 2.1 in a recent paper \cite{Maulik}. 

If $\Bbbk = \bbC$, one can show that for a general, in the sense of moduli, Enriques surface, $\Aut(S)\cong W(\bbE_{10})(2)$. 

 The interesting case is when $S$ is a nodal Enriques surface. Over $\bbC$, Nikulin proves that, up to finite groups, $\Aut(S)$ is determined by the $r$ or the $R$-invariant of $S$. He deduces the following theorem from the Global Torelli Theorem for K3-surfaces \cite{Nikulin1}.

\begin{theorem} [V. Nikulin]\label{nik2} Let $W(S;\Delta_S^+)$ be the subgroup of  $W_S$ leaving invariant the set 
$\Delta_S^+$, and let   $W(\Delta_S^+)$ be the  normal subgroup of $W(S;\Delta_S^+)$ generated by reflections in such curves.  Then the homomorphism 
$\rho:\Aut(S) \to W(S;\Delta_S^+)/W(\Delta_S^+)$ has a finite kernel and a finite cokernel. 
\end{theorem}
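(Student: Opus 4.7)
The plan is to exploit the K3-cover $\pi:X\to S$, write $\iota$ for the Enriques involution, and apply the Global Torelli theorem. First, $\Aut(S)$ is identified with the quotient by $\langle\iota\rangle$ of the automorphisms of $X$ commuting with $\iota$. Any such automorphism $\tilde\sigma$ acts on $\Pic(X)$ preserving the rationally orthogonal decomposition into the invariant part $N^+=\pi^*\Pic(S)$ and the anti-invariant part $N^-$, and its action on $N^+$ recovers $\rho(\sigma)$ on $\Num(S)$. Finite kernel of $\rho$ is then the Mukai--Namikawa finiteness of Enriques automorphisms acting trivially on $\Num(S)$, already invoked in the preceding theorem.

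For the cokernel, the main step is to lift a given $w\in W(S;\Delta_S^+)$ to a Hodge isometry $\tilde w$ of $H^2(X,\bbZ)$ commuting with $\iota^*$. On $N^+$ one is forced to take $\tilde w=\pi^*\circ w\circ (\pi^*)^{-1}$; on $N^-$ one chooses $\pm\id$, adjusts by Nikulin's discriminant-form extension in order to land in $\Pic(X)\subset H^2(X,\bbZ)$, and then extends to the transcendental lattice in a way compatible with the Hodge structure. Existence of such an extension holds after replacing $W(S;\Delta_S^+)$ by a subgroup of finite index, which already accounts for part of the finite cokernel.

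The second step is to arrange that $\tilde w$ preserves the ample cone of $X$, so that Global Torelli produces $\tilde\sigma\in\Aut(X)$ with $\tilde\sigma^*=\tilde w$. I would split the $(-2)$-curves on $X$ according to the action of $\iota$. Pullbacks of $(-2)$-curves from $S$ and $\iota$-exchanged pairs $R_+,R_-$ with $R_++R_-\in N^+$ both give reflections already lying in $W_S$. The remaining ``exotic'' classes are exactly the anti-invariant classes $\delta_-$ of square $-4$ with $\iota^*(x)+x=\pi^*(y)$, $y^2=-2$; by the construction recalled before the theorem these $y$ run through $\Delta_S^+$. Thus the reflections on $X$ needed to restore the ample chamber project to reflections that generate exactly $W(\Delta_S^+)$, and the hypothesis $w\in W(S;\Delta_S^+)$ means that a product of such reflections in $W(\Delta_S^+)$ can be prepended to $w$ so that the corresponding $\tilde w$ lies in the correct chamber. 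Global Torelli then produces $\tilde\sigma$; commutation with $\iota$ follows because $\tilde\sigma\,\iota\,\tilde\sigma^{-1}\iota^{-1}$ acts trivially on $H^2(X,\bbZ)$ and is therefore trivial on $X$ by injectivity of Torelli, so $\tilde\sigma$ descends to an automorphism of $S$ mapping to $w\bmod W(\Delta_S^+)$.

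The main obstacle is the precise matching of root data: one must verify that every $(-2)$-curve of $X$ that is not a pullback nor part of an $\iota$-exchanged pair contributes a class of the form above, and conversely that every element of $\Delta_S^+$ arises this way, so that the reflection group produced is neither smaller nor larger than $W(\Delta_S^+)$. Combined with the control of discriminant-form extensions needed to upgrade $\tilde w$ from $\Pic(X)$ to $H^2(X,\bbZ)$, this is where Nikulin's lattice-theoretic machinery, rather than a soft Torelli argument, does the essential work.
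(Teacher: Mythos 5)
The paper does not actually prove this statement: it is quoted from Nikulin's paper \cite{Nikulin1} with the remark that it is deduced from the Global Torelli Theorem for K3 surfaces. So your overall architecture --- lift $w$ to a Hodge isometry of $H^2(X,\bbZ)$ commuting with $\iota^*$, push it into the ample chamber by reflections, descend via Torelli, and handle the kernel by Mukai--Namikawa --- is exactly the route the paper points to, and there is nothing in the text to compare it against line by line.

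That said, your sketch has a genuine gap at its crux, and it is not just a deferred verification: the trichotomy of $(-2)$-curves on $X$ you propose is wrong as stated. Since $\iota$ is fixed-point-free, no $(-2)$-curve $C$ on $X$ is $\iota$-invariant, and $C\cdot\iota(C)$ is a non-negative even integer. If $C\cdot\iota(C)=0$, then $C+\iota(C)=\pi^*(y)$ with $y$ the class of the $(-2)$-curve $\pi(C)$ on $S$ and $(\iota^*(C)-C)^2=-4$, so $y\in\Delta_S^+$; in particular your first two cases (``pullbacks'' and ``exchanged pairs'') are one and the same case. If $C\cdot\iota(C)=2k>0$, then $(\iota^*(C)-C)^2=-4-4k<-4$, so such a curve contributes \emph{no} element of $\Delta_S^+$; it is not an ``exotic class of square $-4$''. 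What one must show instead is that these curves do not obstruct the chamber correction, e.g.\ because the projection $\tfrac12(C+\iota^*(C))$ to $N^+\otimes\bbR$ has square $k-1\ge 0$, so the wall $C^\perp$ misses the interior of the positive cone of $N^+\otimes\bbR$. Conversely, a class $y\in\Delta_S^+$ corresponds to a pair of orthogonal $(-2)$-classes $x=\tfrac12(\delta_+-\delta_-)$ and $\iota^*(x)$ in $\Pic(X)$ with $x+\iota^*(x)=\pi^*(y)$; these need not be irreducible curves, but $s_x s_{\iota^*(x)}$ lies in the Weyl group of $X$, commutes with $\iota^*$, and descends to $s_y$. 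Establishing that the $\iota^*$-invariant chamber structure of $X$ restricts on $N^+$ to precisely the hyperplanes $y^\perp$ with $y\in\Delta_S^+$ --- no more and no fewer --- is exactly what makes $W(\Delta_S^+)$ the right normal subgroup, and you explicitly set this aside as ``the main obstacle''. Until that matching and the discriminant-form bookkeeping for extending $\tilde w$ from $\Pic(X)$ to $H^2(X,\bbZ)$ are carried out, what you have is an accurate road map to Nikulin's argument rather than a proof.
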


 Let $\sfr(S)$ be the $r$-invariant of $S$ and let  $\la \sfr(S)\ra$ be the subspace of $A_S$ generated by $\sfr(S)$ and $\sfR_S$ be the preimage of $\la \sfr(S)\ra^\perp$ under the reduction modulo $2\bbE$ map. This is called the \emph{Reye lattice} of $S$. An equivalent definition is 
$$\sfR_S = \{x\in \Num(S): x\cdot R \equiv 0 \mod 2\  \textrm{for any $(-2)$-curve}\ R\}.$$
Obviously, the action of $\Aut(S)$ on $S$ preserves the set of $(-2)$-curves, hence preserves the Reye lattice. Thus we have a homomorphism
$$\rho:\Aut(S) \to \Or(\sfR_S).$$
 Let $A_{\sfR_S} = \sfR_S^\vee/\sfR_S$ be the discriminant group of $\sfR_S$. Since any element in the image of $\rho$ lifts to an isometry of the whole lattice $\bbE_{10}$, it must be contained in the kernel of the natural homomorphism 
 $\Or(\sfR_S)\to \Or(A_{\sfR_S})$.

Suppose $S$ is a general nodal Enriques surface. In this case the Reye lattice $\sfR_S$ is  a sublattice of $\Num(S)$ of index 2. It is isomorphic to the lattice $U\oplus E_7\oplus A_1$. One can choose a basis defined by $\delta, E_1,\ldots,E_{10}$ as above such that the nontrivial coset is equal to the coset of the divisor class $\alpha = \delta-2E_{10}$. Then $\sfR_S = \{x\in \bbE_{10}:x\cdot \alpha\in 2\bbZ\}$ has a basis formed by the divisor classes
$$\beta_0 = \delta-E_1-\cdots-E_4,\ \beta_i = E_i-E_{i+1}, \ i = 1,\ldots,9.$$
The matrix of the quadratic lattice is equal to $-2I_{10}+B$, where $B$ is the incidence matrix of the graph:

\begin{figure}[ht]
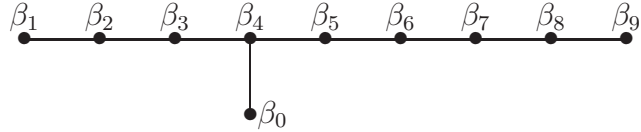

\xy (-30,0)*{};
@={(-10,0),(0,0),(10,0),(20,0),(30,0),(40,0),(50,0),(60,0),(70,0),(20,-10)}@@{*{\bullet}};
(-10,0)*{};(70,0)*{}**\dir{-};
(20,0)*{};(20,-10)*{}**\dir{-};
(0,3)*{\beta_2};(10,3)*{\beta_3};(20,3)*{\beta_4};(30,3)*{\beta_5};(40,3)*{\beta_6};(50,3)*{\beta_7};(60,3)*{\beta_8};
(70,3)*{\beta_9};(-10,3)*{\beta_1};(23,-10)*{\beta_0};
\endxy
\caption{Reye lattice}\label{}
\end{figure}
We denote this quadratic lattice by $E_{2,4,6}$ and call it the \emph{Reye lattice}.

Note that the Reye lattice is 2-reflective, i.e. the subgroup $\Ref_{2}(E_{2,4,6})$ generated by reflections in  vectors $\alpha$ with $\alpha^2 = -2$ is of finite index in the orthogonal group (see \cite{DolgR}, Example 4.11). However, it is larger than the Weyl group $W_{2,4,6}$ of the lattice $E_{2,4,6}$ generated by reflections in the vectors $\beta_i$. The former group is generated by the reflections $s_{\beta_i}$ and the vector $\frakr = f-\beta_9$, where $f = 2\beta_0+\beta_1+2\beta_2+3\beta_3+4\beta_4+3\beta_5+2\beta_6+\beta_7$ is an isotropic vector. We have $\frakr\cdot \beta_9 = 2$ and $\frakr\cdot \beta_i = 0, i\ne 9$.  The Coxeter graph of the full 2-reflection group of $E_{2,4,6}$ is the following:

\xy (-30,10)*{};(-30,-15)*{};
@={(-10,0),(0,0),(10,0),(20,0),(30,0),(40,0),(50,0),(60,0),(70,0),(80,0),(20,-10)}@@{*{\bullet}};
(-10,0)*{};(70,0)*{}**\dir{-};
(20,0)*{};(20,-10)*{}**\dir{-};(70,0.1)*{};(80,0.1)*{}**\dir{-};(70,0)*{};(80,0)*{}**\dir2{-};(70,-0.1)*{};(80,-0.1)*{}**\dir{-};
(0,3)*{\beta_2};(10,3)*{\beta_3};(20,3)*{\beta_4};(30,3)*{\beta_5};(40,3)*{\beta_6};(50,3)*{\beta_7};(60,3)*{\beta_8};
(70,3)*{\beta_9};(-10,3)*{\beta_1};(23,-10)*{\beta_0};(80,3)*{\frakr};
\endxy

Since $\Ref_2(E_{2,4,6})$ is of finite index in $\Or(E_{2,4,6})$, and since $E_{2,4,6}$ is of finite index in $\bbE_{10}$, it is of finite index in $W(\bbE_{10})$. In particular, the 2-level congruence subgroup of $\Ref_2(E_{2,4,6})(2)$ is of finite index in $W(\bbE_{10})$. If we choose an isomorphism  $\sfR_S \to E_{2,4,6}$ such that $\frakr$ represents the class of a $(-2)$-curve, then the image of $W_S^n$ is a normal subgroup of $\Ref_2(E_{2,4,6})(2)$ and the quotient is isomorphic to the 2-level subgroup of $W_{2,4,6}$. 

If $S$ admits a K3-cover, then Nikulin's $r$-invariant $\overline{\Delta_S}^+$ consists of one element. The set $\Delta_S^+$ consists of vectors of norm $-2$ in the unique coset in $\overline{\Num(S)}$ that contains a $(-2)$-curve. If $\Bbbk = \bbC$, 
applying Nikulin's Theorem \ref{nik2}, we obtain, that, up to a finite group, $\Aut(S)$ is isomorphic to 
$W_{2,4,6}$. 

The next theorem gives a much more precise result about the structure of the group of automorphisms of a general nodal Enriques surface.  

Since $\sfR_S$ has discriminant group isomorphic to $(\bbZ/2\bbZ)^2$, its reduction modulo $2\sfR_S$ is a  quadratic vector space. with 2-dimensional radical. The radical is generated by the cosets of $\frakr$ and $\beta_0+\beta_1+\beta_3$.  and its orthogonal group is isomorphic to $G = 2^8\rtimes \Sp(8,\bbF_2)$.  It is known that the homomorphism $W(\sfR_S)\to G$ is surjective \cite{Coble1},\cite{Griess}. Let $W(\sfR_S)(2)'$ be equal to the pre-image of the subgroup $2^8$. Obviously it contains the 2-level congruence subgroup $W(\sfR_S)(2)$ as a subgroup of index $2^8$.

\begin{theorem} Let $S$ be a general nodal Enriques surface and $\rho:\Aut(S)\to W(\sfR_S)$ be its natural representation. Then the kernel of $\rho$ is trivial, and the image is equal to $W(\sfR_S)(2)'$.
\end{theorem}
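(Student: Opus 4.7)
The plan is to extend the strategy used in the unnodal case to the nodal setting, with the Reye lattice $\sfR_S$ playing the role of $\bbE_{10}$ and the involutions arising from bielliptic polarizations appropriate to a nodal surface.

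First I would establish injectivity of $\rho$. The argument invoked in the unnodal theorem, which calls on Mukai-Namikawa in the complex case and on the approach of \cite{DolgachevT} in general, shows that any automorphism acting trivially on $\Num(S)$ is the identity; this argument makes no use of the unnodal hypothesis, so it applies verbatim here. Next I would show that $\rho$ takes values in $W(\sfR_S)$. This is automatic: $\Aut(S)$ permutes the set of $(-2)$-curves on $S$, and the Reye lattice is defined purely in terms of that set, so it is preserved.

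The delicate containment is that the image in fact lies in $W(\sfR_S)(2)'$, that is, acts trivially on the $8$-dimensional non-degenerate symplectic quotient of $\sfR_S/2\sfR_S$ by its radical. Since $S$ is general nodal, Theorem \ref{T3.4} tells us that every $(-2)$-curve has the same class $\bar{\frakr}\in \overline{\Num}(S)$, so $\Aut(S)$ fixes $\bar{\frakr}$ pointwise; using the inclusion $\sfR_S\hookrightarrow \bbE_{10}$ and the fact that $\rho$ factors through isometries of $\bbE_{10}$ preserving the index-$2$ sublattice $\sfR_S$, one deduces that both generators of the radical of $\sfR_S/2\sfR_S$ are fixed and that the induced action on the symplectic quotient is trivial. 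Combined with the surjectivity $W(\sfR_S)\to G = 2^8\rtimes \Sp(8,\bbF_2)$ used to define $W(\sfR_S)(2)'$, this gives the desired containment.

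For surjectivity onto $W(\sfR_S)(2)'$, I would produce enough explicit automorphisms by the geometric method used for unnodal surfaces. The bielliptic polarizations $|4E+2R|$ and $|4E+2R+K_S|$ described in Section 4, which exist on any nodal Enriques surface, yield degree-$2$ covers of the (possibly degenerate) quartic del Pezzo surface, and their deck transformations give involutions $\sigma\in \Aut(S)$ with readily computable image $\sigma_*\in W(\sfR_S)$. By varying $E$ and $R$, conjugates of $\sigma_*$ by elements of $W(\sfR_S)$ are also realized by such deck involutions. It then remains to show, as a group-theoretic statement, that the normal subgroup of $W(\sfR_S)$ generated by these involutions is exactly $W(\sfR_S)(2)'$.

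I expect this last, purely group-theoretic step to be the main obstacle, and to be the analogue for $E_{2,4,6}$ of the Coble-Looijenga-Allcock result for $\bbE_{10}$ proved in the previous proposition. The Reye lattice is not unimodular and its mod-$2$ reduction has a $2$-dimensional radical, so Allcock's clean argument must be adapted: one would fix an isotropic basepoint in $\sfR_S/2\sfR_S$, compute the stabilizer in both $W(\sfR_S)$ and in $G$, and use the Coxeter diagram of the $2$-reflection group of $E_{2,4,6}$ displayed above to count orbits of primitive isotropic vectors. Matching orbit counts on both sides would then force equality of the normal closure with $W(\sfR_S)(2)'$.
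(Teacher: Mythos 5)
Your overall architecture matches the paper's: injectivity via the Mukai--Namikawa/\cite{DolgachevT} argument, geometric realization of certain lattice involutions by deck transformations of degree-two covers, and then a purely group-theoretic statement that the normal closure of the realized involutions is $W(\sfR_S)(2)'$. But there is a genuine gap in your choice of generators. You propose to realize only the involutions coming from the polarizations $|4E+2R|$ and $|4E+2R+K_S|$, i.e.\ (in the paper's notation) the Bertini involution $B = -\id_{L_3}\oplus \id_{L_3^\perp}$ with $L_3\cong E_8$, together with its conjugates. The paper needs \emph{three} geometrically distinct involutions: the Kantor involution $K = -\id_{L_1}\oplus\id_{L_1^\perp}$ with $L_1\cong E_7$, realized by the double cover $S\to C_3$ of a cubic surface given by $|2E_1+2E_2-R|$ (with $R\cdot E_1=R\cdot E_2=0$); the Bertini involution $B$, realized by $|4E_1+2R|$; and the Geiser involution $G$, realized by $|2E_1+2E_2|$. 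The group-theoretic input (Coble, \cite{CD}, now Allcock) is precisely that $\la\la B,K\ra\ra = W(\sfR_S)(2)$ and that $K,B,G$ together normally generate $W(\sfR_S)(2)'$. Since $K$ and $B$ have $(-1)$-eigenspaces of different ranks ($7$ and $8$), no conjugate of $B$ equals $K$, so the normal closure of your Bertini-type involutions alone is a proper subgroup; and $G$ is needed separately because it lies in $W(\bbE_{10})(2)$ but not in $W(\sfR_S)(2)$, accounting for the index-$2^8$ enlargement from $W(\sfR_S)(2)$ to $W(\sfR_S)(2)'$. Your "orbit-counting" adaptation of Allcock's argument is the right idea for the group-theoretic step, but it cannot rescue an insufficient generating set of realized automorphisms.

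A secondary issue: your argument that the image of $\rho$ lands in $W(\sfR_S)(2)'$ is a non sequitur as written. Knowing that $\Aut(S)$ fixes the single class $\bar{\frakr}\in\overline{\Num}(S)$ of the $(-2)$-curves, and even that it fixes the radical of $\sfR_S/2\sfR_S$, does not imply that it acts trivially on the $8$-dimensional non-degenerate quotient; the stabilizer of $\bar{\frakr}$ is much larger than $W(\sfR_S)(2)'$. The paper's survey-level proof does not spell this containment out either, but it cannot be obtained by the one-line deduction you give; over $\bbC$ it ultimately rests on the Torelli-theorem analysis of Nikulin's invariants for a general nodal surface.
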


\begin{proof} Consider the following sublattices of $E_{2,4,6}$.
\begin{itemize}
\item $L_1 =\la \beta_0,\ldots,\beta_6\ra$. It isomorphic to $E_7$ and $L_1^\perp \cong A_1\oplus U$.
\item $L_2 = \la \beta_0,\beta_2,\ldots,\beta_7\ra$. It is isomorphic to $E_7$ and $L_2^\perp\cong A_1\oplus U(2)$.
\item $L_3 = \la \beta_0,\beta_2,\ldots,\beta_8\ra$. It is isomorphic to $E_8$ and $L_3^\perp\cong A_1\oplus A_1(-1)$.
\end{itemize}
Define the following involutions of $\bbT_{2,4,6}$:
\begin{itemize}
\item $K = -\id_{L_1}\oplus \id_{L_1^\perp}$;
\item $G = (-\id_{L_2}\oplus \id_{L_2^\perp})\circ s_{\beta_9}$;
\item $B = -\id_{L_3}\oplus \id_{L_3^\perp}$;
\end{itemize}

One  proves in \cite{CD}, again following A. Coble, that the minimal normal subgroup $\la \la B,K\ra \ra$ containing $B,K$ coincides with $W(\sfR_S)(2)$ and together with $G$, the three involutions $K,B,G$ normally generate $W(\sfR_S)(2)'$.\footnote{The  computational proof from \cite{CD} can now be replaced by a conceptual proof due to D. Allcock (\textit{Congruence subgroups and Enriques surface automorphisms}, math.AG.arXiv:1601.00103).} It remains to show that all of these involutions and their conjugate can be realized by automorphisms of $S$.

The involution  $K$ is realized by the deck transformation of the double cover $S\to C_3$ of a cubic surface $C_3$ defined by a linear system $|2E_1+2E_2-R|$, where $E_1,E_2$ are half-fibers with $E_1\cdot E_2 = 1$ and $R$ is a smooth rational curve with $R\cdot E_1 = R\cdot E_2 = 0$. One can show that it always exists. 

The involution $B$ is realized by the deck transformation of the double cover $S\to \calD'$ onto a degenerate 4-nodal quartic del Pezzo surface defined by the linear system $|4E_1+2R|$, where $E_1$ is a half-fiber and $R$ is a smooth rational curve with $E_1\cdot R = 1$.

Finally, the involution $G$ is realized by the double cover $S\to \calD$ defined by the linear system $|2E_1+2E_2|$ which we considered in the previous discussion. Note that $G\in W(\bbE_{10})(2)$ but does not belong to $W(\sfR_S)(2)$.
\end{proof}

The letters B, G and K  here stand for E. Bertini, C. Geiser and S. Kantor. The K3-cover $\pi:X\to S$ of a general nodal Enriques surface is birationally isomorphic to a quartic symmetroid $Y$. This is a quartic surface in $\bbP^3$ with 10 nodes, its equation is given by the determinant of a symmetric matrix with entries linear forms in the projective coordinates. The surface $X$ admits a birational map $\sigma:X\to Y$, so it is a minimal resolution of $Y$. Let $Q_1,\ldots,Q_{10}$ be the exceptional curves and $H$ is the class of a pre-image of a plane section of $Y$.  For a general $X$, the Picard group of $X$ is generated by $H,Q_i$ and $H'$ such that $2H'\sim 3H-Q_1-\cdots-Q_{10}$. The orthogonal complement of the divisor class of $2H-Q_1-\cdots-Q_{10}$ is isomorphic to  $\pi^*(\sfR_S) \cong \sfR_S(2)$. The involutions K, B, and G are induced by a Cremona involution of $\bbP^3$ that leave $Y$ invariant. Let us describe them.

The Kantor involution is  defined by the linear system $|Q|$ of quartics through the first 7 nodes $p_1,\ldots,p_7$ of the symmetroid. This linear system defines a degree 2 rational map $\bbP^3\dasharrow \Sigma\subset \bbP^6$, where $\Sigma $ is a cone over the Veronese surface in $\bbP^5$. We consider the elliptic fibration on the blow-up $\tilde{\bbP}^3$ of 7 points of $\bbP^3$  defined by the net of quadrics through the seven points. It has the negation birational involution defined by the eighth base point. If we take a quartic elliptic curve $E$ through $p_1,\ldots,p_7$ passing through a point $p$, then a quartic surface from the linear system $|Q|$  that contains $p$ intersects $E$ at one more point $p'$. This defines an involution on $E$, $p\mapsto p'$. The set of fixed points is the set of quartics from $|Q|$ that has an eight node. Thus the three remaining nodes of the symmetroid are fixed and this makes the symmetroid invariant.  Note that the birational Kantor involution of $\bbP^3$ is an analog of the Bertini involution of the plane. 

The other two involutions are \emph{dilated} Bertini and Geiser involutions of the plane. They extend these involutions to 
the three dimensional space. 

The classification of finite subgroups of $\Aut(S)$ is far from being complete. We do not even know what is the list of possible automorphism groups of an unnodal Enriques surface. However, we can mention the following result (explained to me by Daniel Allcock).

\begin{theorem}\label{al1} Let $G$ be a finite non-trivial subgroup contained in $W_S(2)$. Then it is a group of order 2, and all such subgroups are conjugate in $W_S$. The quotient $S/G$ is isomorphic to a 4-nodal quartic del Pezzo surface.
\end{theorem}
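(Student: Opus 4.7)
The plan proceeds in four steps.

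\emph{Step 1: finite subgroups of $W_S(2)$ are elementary abelian $2$-groups.} Every $g\in W_S(2)$ satisfies $(g-1)(\bbE_{10})\subseteq 2\bbE_{10}$, so its eigenvalues have the form $1+2\lambda$ with $\lambda$ an algebraic integer. A primitive $n$-th root of unity $\zeta$ makes $(\zeta-1)/2$ an algebraic integer only when $\zeta=\pm 1$, so every finite-order element of $W_S(2)$ is an involution; the identity $(\tau_1\tau_2)^2=1$ then forces abelianness, so any finite $G\subset W_S(2)$ is an elementary abelian $2$-group.

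\emph{Step 2: the shape of a non-trivial involution.} For such a $\tau$, the congruence $\tau\equiv 1\pmod 2$ implies that for every $x\in\bbE_{10}$ both $(x+\tau(x))/2$ and $(x-\tau(x))/2$ lie in $\bbE_{10}$. Hence the rational $\pm 1$-eigenspaces refine to an orthogonal decomposition of lattices $\bbE_{10}=L^+\perp L^-$, with both summands even and unimodular. Because $\tau\in W_S$ preserves the ample cone, $L^+$ has signature $(1,a)$ and $L^-$ has signature $(0,9-a)$; the constraint $p-q\equiv 0\pmod 8$ for even unimodular lattices, combined with total rank $10$, forces $a=1$, giving $L^+\cong U$ and $L^-\cong E_8(-1)$. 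Thus every non-trivial involution in $W_S(2)$ equals $1_U\oplus(-1)_{E_8}$ for some orthogonal splitting, precisely the Coble involution of the preceding proposition.

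\emph{Step 3: the bound $|G|\le 2$ and conjugacy.} Suppose $\tau_1,\tau_2\in G$ are distinct non-identity involutions; then $\tau_3=\tau_1\tau_2$ is also a non-trivial involution. Decomposing $\bbE_{10}\otimes\bbQ$ into the joint $\pm$-eigenspaces $V_{\varepsilon_1\varepsilon_2}$, Step~2 applied to each of $\tau_1,\tau_2,\tau_3$ yields
\[
\dim V_{++}+\dim V_{+-}=\dim V_{++}+\dim V_{-+}=\dim V_{++}+\dim V_{--}=2.
\]
Summing these three identities and subtracting $\dim\bbE_{10}=10$ gives $2\dim V_{++}=-4$, a contradiction. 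Hence $|G|=2$. Conjugacy of all such $G$ inside $W_S$ then reduces to the classical transitivity of $W(\bbE_{10})$ on orthogonal decompositions $\bbE_{10}=U\perp E_8$.

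\emph{Step 4: identification of the quotient, and the main obstacle.} Write $L^+=\la E_1,E_2\ra$ with $E_1\cdot E_2=1$, so that $|2E_i|$ are genus-one pencils on $S$. By the discussion of bielliptic maps in Section~4, the linear system $|2E_1+2E_2|$ defines a degree-$2$ morphism $\phi\colon S\to\calD_4$ onto a $4$-nodal quartic del Pezzo surface, and its deck involution $\tau'$ acts on $\Num(S)$ as $1_U\oplus(-1)_{E_8}$. Since $\rho(\tau')=\rho(\tau)$ and $\rho$ is injective on $\Aut(S)$ in the unnodal case, $\tau=\tau'$, and therefore $S/G\cong\calD_4$. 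The main technical point is Step~2: the mod-$2$ hypothesis is used crucially to upgrade the rational eigenspace decomposition to an \emph{orthogonal decomposition of lattices} rather than merely a finite-index one, after which Milnor's classification of indefinite even unimodular lattices pins down the unique shape $(U,E_8(-1))$. Once Step~2 is in place, Step~3 is a single-line dimension count and Step~4 is a direct invocation of the bielliptic construction already developed.
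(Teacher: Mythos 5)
Your proposal is correct, and its skeleton matches the paper's: the mod-$2$ congruence forces any involution in $W_S(2)$ preserving the positive cone to split $\bbE_{10}$ as an orthogonal sum of even unimodular lattices $U\perp E_8$, and the quotient is then identified with $\calD_4$ via the bielliptic system $|2E_1+2E_2|$ exactly as in the paper. Where you diverge is in how you exclude everything other than a single involution, and your route is arguably cleaner and more complete. The paper rules out odd-order elements by an averaging argument ($x+\sigma(x)+\cdots+\sigma^{m-1}(x)\equiv mx \bmod 2\bbE_{10}$) and order-$4$ elements by viewing $\Ker(\sigma^2+1)\cong E_8$ as a module over $\bbZ[t]/(t^2+1)$; your Step~1 replaces both cases at once by the norm computation showing that a root of unity $\zeta$ with $(\zeta-1)/2$ an algebraic integer must be $\pm1$ (essentially Minkowski's lemma that the torsion of the level-$2$ congruence kernel is $2$-torsion), which is a genuine simplification. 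More importantly, your Step~3 supplies an argument the paper omits entirely: the paper shows every non-trivial element of $G$ has order $2$ and is of Coble type, but never explicitly excludes $G\cong(\bbZ/2\bbZ)^2$; your joint-eigenspace dimension count ($2\dim V_{++}=-4$) closes that gap in one line. Your signature argument ($9-a\equiv 0 \bmod 8$ for the negative definite summand) also makes explicit why the splitting must be $U\perp E_8$ rather than merely asserting it, and your reduction of the conjugacy claim to transitivity of $W(\bbE_{10})$ on such decompositions addresses a point the paper leaves implicit. The one thing to keep in mind is that throughout Step~4 you need $S$ unnodal (so that $\rho$ is injective and the bielliptic cover is separable of degree $2$); this is the standing hypothesis of the surrounding discussion, but it is worth stating explicitly in the theorem's hypotheses.
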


\begin{proof} We identify $\Num(S)$ with the lattice $\bbE_{10}$.  Suppose $G$ contains an element $\sigma$ of order $2$. Then $V =\bbE_{10}\otimes \bbQ$ splits into the orthogonal direct sum of eigensubspaces $V_+$ and $V_-$ with eigenvalues $1$ and $-1$. 
For any $x= x_++x_-\in \bbE_{10}, x_+\in V_-+,x_-\in V_-$, we have 
$$\sigma(x)\pm x = (x_+-x_-)\pm (x_++x_-)\in 2\bbE_{10}.$$
This implies $2x_{\pm}\in 2\bbE_{10}$, hence $x_{\pm}\in \bbE_{10}$ and  the lattice $\bbE_{10}$ splits into the orthogonal sum of sublattices $V_+\cap \bbE_{10}$ and $V_-\cap \bbE_{10}$. Since $\bbE_{10}$ is unimodular, the sublattices must be unimodular. This gives $V_+\cap \bbE_{10} \cong U$ or $E_8$ and $V_-\cap \bbE_{10} \cong E_8$ or $U$, respectively. Since $\sigma = g_*$ leaves invariant an ample divisor,  we must have $V_+\cap \bbE_{10} \cong U$.
Thus $\sigma = 1_U+(-1)_{E_8}$ and hence $\sigma = g_*$ for some deck transformation of $S\to \calD$.

Suppose $G$ contains an element $\sigma$ of odd order $m$. Then $\sigma^m-1 = (\sigma-1)(1+\sigma+\cdots+\sigma^{m-1}) = 0$, hence, for any $x\not\in 2\bbE_{10}$ which is not $\sigma$-invariant, we have 
$$x+\sigma(x)+\cdots+\sigma^{m-1}(x) \equiv  mx \mod 2\bbE_{10}.$$
Since $m$ is odd, this gives $x\in 2\bbE_{10}$, a contradiction.

Finally, we may assume that $G$ contains an element  of order $2^k, k > 1$. Then it contains an element $\sigma$ of 
order $4$. Let $M = \Ker(\sigma^2+1)\subset \bbE_{10}$. Since $\sigma^2 = -1_{E_8}\oplus 1_U$ for some direct sum decomposition $\bbE_{10}= E_8\oplus U$, we obtain $M\cong E_8$. The equality 
$(\sigma^2+1)(\sigma(x)) = \sigma^3(x)+\sigma(x) = -(\sigma^2+1)(x)$, implies that  $\sigma(M) = M$. Consider $M$ as a module 
over the principal ideal domain $R = \bbZ[t]/(t^2+1)$. Since $M$ has no torsion, it is isomorphic to $R^{\oplus 4}$. This implies that there exists $v,w\in M$ such that $\sigma(v) = w$ and $\sigma(w) = -v.$ However, this obviously contradicts our assumption that $\sigma\in W(\bbE_{10})(2)$.
 \end{proof}
 
Over $\bbC$ one can use the coarse moduli space of Enriques surfaces to show that a general (resp. general nodal) Enriques surface has automorphism group isomorphic to $W(\bbE_{10})(2)$ (resp. $W_{2,4,6}(2)'$). I believe that the same is true in any characteristic  but I cannot prove it (except in the case of  $\bmu_2$-surfaces or general unnodal surface if $p\ne 2$). 

In any case if $p\ne 2$, the image of the  automorphism group $\Aut(S)$ in $W(\bbE_{10})$ is not the whole group. This is because $W(\bbE_{10})$ contains a subgroup isomorphic to $W(E_8)$ and the known information about finite groups of automorphisms of K3-surfaces shows that the order of this group is too large to be realized as an automorphism group of a K3-surface and hence of an Enriques surface.

When the root invariant of an Enriques surface becomes large, the automorphism group may become a finite group. The first example of an Enriques surface with a finite automorphism group isomorphic to $\frakS_4$ belongs to G. Fano \cite{Fano1}. However, I failed to understand Fano's proof. An example of an Enriques surface with automorphism group isomorphic to the dihedral group $D_4$ of order 8 was given in my paper \cite{DolgInv}. At that time I did not know about Fano's example. Later on all complex Enriques surfaces with finite automorphism groups were classified by Nikulin \cite{Nik} (in terms of their root invariant $\sfR_S$ and   by S. Kond\={o} \cite{KondoF} by explicit construction). There  are  seven classes of such surfaces 
with automorphisms groups 
$$D_4,\  \frakS_4,\   2^4\rtimes D_4, 2^2\rtimes (\bbZ/4\bbZ\ltimes \bbZ/5\bbZ),\ \bbZ/2\bbZ\rtimes \frakS_4, \frakS_5,\ \frakS_5.$$
Their Nikulin $R$-invariants  are, respectively,
$$(E_8\oplus A_1,\{0\}),\ (D_9,\{0\}), (D_8\oplus A_1^{\oplus 2},\bbZ/2\bbZ), $$
$$(D_5\oplus D_5, \bbZ/2\bbZ), (E_7\oplus A_2\oplus A_1,\bbZ/2\bbZ),\ (E_6\oplus A_4,\{0\}), (A_9\oplus A_1,\{0\}).$$
Note that Kond\={o}'s classification works in any characteristic $\ne 2$ and  there are more examples in characteristic 2 (see \cite{DolgachevT}).

We refer to the latest works in progress of H. Ito, S. Mukai and H.Ohashi  on the classification of finite groups of automorphisms of complex Enriques  surfaces \cite{Mukai1}, \cite{Mukai2}, \cite{Mukai3}, \cite{Ito}. Note, that, via equivariant 
lifting an Enriques surface to characteristic 0, the classification is the same in all characteristics except when $p = 2$ and $S$ is an $\mu_2$ or an $\balpha_2$-surface (see \cite{DolgachevT}, Theorem 2).  Another remark is that any finite subgroup of $W(\bbE_{10})$ is conjugate to a subgroup of $W(R)$, where $R$ is a negative root lattice corresponding to some subdiagram  of the  Dynkin diagram of the root basis $\alpha_0,\ldots,\alpha_9$ \cite{Bourbaki}, Chapter V, \S 4, Exercise 2. The types of maximal subdiagrams with this property  are 
$$D_9,\ A_1+A_8, \ A_1+A_2+A_6, \ A_4+A_5, \ D_5+A_4,\ E_6+A_3, E_7+A_2, E_8+A_1,A_9.$$  
This implies that the image of a finite subgroup $G$ of $\Aut(S)$ in $W(\bbE_{10})$ is isomorphic to a subgroup of $W(R)$, where $R$ is one of the above root systems.

To conclude our survey let me refer to my earlier surveys of the subject \cite{DolgachevP},\cite{DolgachevP2}. Sadly, many of the problems of the theory discussed in these surveys remain unsolved.

\end{document}